\documentclass[a4paper,12pt,reqno]{amsart}
\usepackage{amsmath,amsthm,mathrsfs}
\usepackage{latexsym}
\usepackage{amssymb}
\usepackage[all]{xy}
\usepackage{enumerate}
\usepackage{pst-all}
\usepackage{pstricks}
\usepackage{enumerate}
\usepackage{lipsum}
\usepackage{float}
\usepackage{multicol}
\input{xy}
\theoremstyle{definition}
 \newtheorem{theo}{Theorem}[section]
 \newtheorem{propo}[theo]{Proposition}
 \newtheorem{lem}[theo]{Lemma}
 \newtheorem{cor}[theo]{Corollary}
 \newtheorem{obs}[theo]{Observation}
 \newtheorem{ejem}[theo]{Example}
 \newtheorem{defin}[theo]{Definition}
\def\L{\Lambda}
\theoremstyle{definition}

\newcolumntype{C}{>{$}c<{$}}
\newcolumntype{L}{>{$}l<{$}}
\newcolumntype{R}{>{$}r<{$}}
\def\G{\Gamma}
\def\oo{\mathfrak{o}}

\def\C{\mathscr{C}}

\def\T{\mathscr{T}}
\def\V{\mathscr{V}}
\def\Q{\mathcal{Q}}
\def\CM{\mathcal{C}}

\def\occ{\textrm{occ}}
\def\val{\textrm{val}}
\def\CC{\mathscr{CC}}
\def\hom{\textrm{Hom}}

\def\rad{\textrm{rad}}

\def\dime{\textrm{dim}}
\def\gcd{\textrm{gcd}}
\def\Z{\mathbb{Z}}
\def\GG{\mathscr{G}}
\newcommand{\ov}{\overline}

\makeatletter
\@namedef{subjclassname@2020}{\textup{2020} Mathematics Subject Classification}
\makeatother

\title[BCA's induced by finite groups]{On Brauer configuration algebras induced by finite groups}
\author{Alex Sierra C.}
\date{}
\keywords{Brauer configuration algebra, Cartan matrix, length of a module, finite groups, subgroup-occurrence of an element}

\address{Faculdade de Matem\'atica, Universidade Federal do Par\'a, Guam\'a, CEP 66075-110, Bel\'em - PA, Brasil}

\email{alexsc@ufpa.br\\alhadamard@gmail.com}

\subjclass[2020]{16G20, 16G99, 20D60}

\thanks{\underline{This is a preliminary version}}

\begin{document}
 
 \begin{abstract}
 In this article we calculate two aspects of the representation theory of a Brauer configuration algebra: its Cartan matrix, and the module length of its associated indecomposable projective modules. Then we introduce the concept of {\em subgroup-occurrence} of an element in a group and use the previous aspects to demonstrate combinatorial equalities satisfied for any finite group.

 \end{abstract}
\maketitle 
 
\section*{Introduction}

Brauer configuration algebras are a new class of associative symmetric finite dimensional algebras that were introduced in the mathematical literature in 2017 by E. Green and S. Schroll in \cite{brau}. Initially they were defined as a generalization of Brauer graph algebras, in the sense that every Brauer graph algebra is a Brauer configuration algebra. Since their introduction, a few aspects of these algebras have been studied, and in some cases completely determined. For example, the vector dimension of both a Brauer configuration algebra \cite[Proposition 3.13]{brau} and the center of a Brauer configuration algebra \cite[Theorem 4.9]{mya} have been successfully calculated. Recently, it was shown that it is possible to associate a Brauer configuration algebra to a dessin d'enfant, and also that the vector dimension of the associated Brauer configuration algebra, as well as the vector dimension of its center, are invariant under some natural action of the Galois group Gal$(\overline{\mathbb{Q}}/\mathbb{Q})$ \cite{MalicSchroll}.\\

The present work is divided mainly in two parts. The first part is directed to the determination of two aspects of the representation theory of a Brauer configuration algebra: its Cartan matrix and the module length of its associated indecomposable projective modules. The second part is directed to the introduction of a new technique that uses the Brauer configuration algebras as a tool. The general idea is as follows. It will be shown that using the aspects of the representation theory determined in the first part, we can extract combinatorial information satisfied by structures or objects that, in certain form, are susceptible to be encoded in terms of a class made up of Brauer configurations, obtaining in this way results that were unknown until now. In this paper, this technique is applied for the class of finite groups of an order different from a prime number, because, as it will be seen, a finite group of this type induces a family of Brauer configurations, i.e, it can be encoded in terms of a class of Brauer configurations. After obtaining these results satisfied for the class of finite groups of order different from a prime number, it is possible to extend them to the class of all finite groups. One of the reasons for achieving these results, is due to the fact that the concept of the subgroup-occurrence of an element in a group is used, which may be interpreted as the valence function of a Brauer configuration in the context of the Brauer configuration induced by a finite group. Few  other properties that follow from the definition of the subgroup-occurrence are studied in this paper. This concept seems promising.\\

The paper is outlined as follows. We start Section \ref{sec02} by giving the basic preliminaries about Brauer configuration algebras, such as definitions and examples. In Section \ref{sec3} the necessary definitions and the technique to determine the vector dimension of the $K$-spaces $v\L w$ are exposed, and the values of these dimensions as well. 
In Section \ref{sec5} the explicit form of the Cartan matrix of a Brauer configuration algebra is presented and a few examples are considered. In Section \ref{sec6} non-projective uniserial modules over a Brauer configuration algebra are presented and described in the same way as they appear in  \cite[Section 3.4]{brau}, and then they are used for the  construction of a composition series of a class of submodules contained in an indecomposable projective module associated to a Brauer configuration algebra.  Then, this composition series is used to calculate explicitly the length formula of any indecomposable projective module associated to a Brauer configuration algebra. In Section \ref{sec_indu} the concept of subgroup-occurrence of an element in a group is introduced and a few properties that follow from this concept are proved. After this, it is demonstrated that any finite group of order different from a prime number induces a family of Brauer configurations. Then, by applying the results obtained in Section \ref{sec5}  and Section \ref{sec6}, new combinatorial properties about finite groups are demonstrated (Theorem \ref{057} and Theorem \ref{062}). In Section \ref{sec_center}, it is shown that given a pair  $(G,\mu)$, where $G$ is a finite group with identity element $e$ and $\mu:G\to\Z_{>0}$ is a function such that $\mu(e)=1$, a class formed by associative finite dimension $K$-algebras can be associated to it in such a way that when $\mu\equiv1$, then the vector dimension of the center of any of these $K$-algebras coincides with the number of subgroups contained in $G$ (Theorem \ref{066}). 

In some of the sections it is assumed that the base field $K$ is algebraically closed.

\section{Brauer configuration algebras}\label{sec02}
In this section we present the definition of a Brauer configuration algebra. We also set the notation and terminology that  will be used in most of the nexts sections.\\

A \textit{Brauer configuration} is a quadruple $\G=(\G_0,\G_1,\mu,\oo)$ given by:
\begin{enumerate}[(1)]
\item $\G_0$ is a finite set of elements that we call \textit{vertices};
\item $\G_1$ is a finite collection of finite labeled multisets\footnote{A multiset is a set where repetitions of elements are allowed.} each of them formed by vertices. We call each element of $\G_1$ a \textit{polygon};
\item $\mu:\G_0\to\mathbb{Z}_{>0}$ is a set function that we call the \textit{multiplicity function};
\item for $\alpha\in\G_0$ and $V\in\G_1$ we denote by $\occ(\alpha,V)$ the number of times that the vertex $\alpha$ occurs in $V$. By $\val(\alpha)$ we denote the  integer value \[\val(\alpha):=\sum_{V\in\G_1}\occ(\alpha,V).\] A vertex $\alpha\in\G_0$ is called \textit{truncated} if $\val(\alpha)=\mu(\alpha)=1$. The \textit{orientation} $\oo$ means that for each nontruncated vertex $\alpha$ there is a chosen cyclic ordering of the polygons that contain $\alpha$, including repetitions. (See Observation \ref{003})
\end{enumerate}

Additionally to this we require that $\G$ satisfies the following conditions (see \cite[Definition 1.5]{brau}).
 \begin{enumerate}
  \item[C1.] Every vertex in $\G_0$ is a vertex in at least one polygon in $\G_1$.
  \item[C2.] Every polygon in $\G_1$ has at least two vertices.
  \item[C3.] Every polygon in $\G_1$ has at least one vertex $\alpha$ such that $\val(\alpha)\mu(\alpha)>1$.
 \end{enumerate}

\begin{obs}\label{003}
For each $\alpha\in\G_0$ such that $\val(\alpha)=t>1$ or $\mu(\alpha)>1$, let $V_1,\ldots, V_t$ be the list of polygons in which $\alpha$ occurs as a vertex, and where a polygon $V$ occurs $\occ(\alpha,V)$ times in the list, that is $V$ occurs the same number of times that $\alpha$ occurs as a vertex in $V$. The cyclic order at vertex $\alpha$ is obtained by linearly ordering the list, say $V_{i_1}<\cdots<V_{i_t}$ and by adding $V_{i_t}<V_{i_1}$. We observe that any cyclic permutation of a chosen cyclic ordering at vertex $\alpha$ can represent the same ordering. That is, if $V_1<\cdots<V_t$ is the chosen cyclic ordering at vertex $\alpha$, so is a cyclic permutation such as $V_2<V_3<\cdots<V_t<V_1$ or $V_3<V_4<\cdots<V_t<V_1<V_2$. If $V_{i_1}<\cdots<V_{i_t}$ is a cyclic order at the vertex $\alpha$ we denote it by $\alpha:V_{i_1}<\cdots<V_{i_t}$ and we call it a \textit{successor sequence at} $\alpha$. Now, if we have that $\mu(\alpha)>1$ but $\val(\alpha)=1$ and $V$ is the only polygon where $\alpha$ belongs, we denote the successor sequence at $\alpha$ simply by $\alpha:V$.
\end{obs}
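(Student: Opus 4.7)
The observation is largely notational: it introduces the term \emph{successor sequence} and fixes the conventions $\alpha : V_{i_1} < \cdots < V_{i_t}$ and, in the degenerate case $\val(\alpha)=1$, $\mu(\alpha)>1$, the shorthand $\alpha : V$. The one substantive claim embedded in the statement is that any cyclic permutation of a chosen successor sequence represents \emph{the same} cyclic ordering at $\alpha$. My plan is to justify exactly this claim, since the remaining content of the observation is definitional once the notion of a cyclic order on a (multi)set is pinned down.

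First, I would reformulate the data of a cyclic ordering at $\alpha$ in terms of a successor function. Let $\{V_1,\ldots,V_t\}$ be the list of labeled occurrences of $\alpha$ across the polygons of $\G_1$, so that the total length of the list is $\val(\alpha)=t$ (with each polygon $V$ contributing $\occ(\alpha,V)$ entries). A linear ordering $V_{i_1}<\cdots<V_{i_t}$ together with the wrap-around $V_{i_t}<V_{i_1}$ is equivalent to specifying a bijection $\sigma_{\alpha}$ on this set of occurrences defined by $\sigma_{\alpha}(V_{i_j})=V_{i_{j+1}}$ for $1\le j<t$ and $\sigma_{\alpha}(V_{i_t})=V_{i_1}$. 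The cyclic ordering should be identified with the cyclic structure determined by $\sigma_{\alpha}$, i.e.\ with the orbit of the linear order under the natural action of $\Z/t\Z$.

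With this reformulation, the claim reduces to a direct verification. Given $V_{i_1}<\cdots<V_{i_t}$ and any cyclic shift $V_{i_{k+1}}<\cdots<V_{i_t}<V_{i_1}<\cdots<V_{i_k}$, both sequences induce precisely the same successor function $\sigma_{\alpha}$: every occurrence $V_{i_j}$ still has successor $V_{i_{j+1}}$ with indices read modulo $t$. Hence the two sequences describe the identical cyclic ordering, and the notation $\alpha : V_{i_1}<\cdots<V_{i_t}$ is unambiguous up to the conventions specified.

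The only step that requires mild care, and which I expect to be the only real obstacle, is bookkeeping for the multiset structure: because a polygon $V$ may occur $\occ(\alpha,V)>1$ times in the sequence, one must define $\sigma_{\alpha}$ on \emph{occurrences} of $\alpha$ rather than on the underlying set of polygons, so that distinct occurrences of the same $V$ can have distinct successors. Once occurrences are labeled, everything else is formal. In the degenerate case $\mu(\alpha)>1$, $\val(\alpha)=1$, the list has a single element $V$, the successor function is trivial, and the notation $\alpha : V$ is then the unique cyclic ordering on a singleton, consistent with the general construction.
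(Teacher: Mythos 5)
Your proposal is correct and matches the paper's (implicit) treatment: the observation is definitional, and the one substantive claim — invariance under cyclic shifts — is exactly captured by identifying the cyclic order with the induced successor function on labeled occurrences, which is precisely the structure the paper itself extracts immediately afterwards when it defines the successor of a polygon at $\alpha$ and the arrows of the induced quiver. Your care in defining the successor map on occurrences rather than on polygons is the right bookkeeping and is consistent with how the paper handles self-folded polygons such as $V_3$ in Example 1.2.
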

\begin{ejem}\label{002}
Let $\G=(\G_0,\G_1,\mu,\oo)$ be the Brauer configuration given by the following data: $\G_0=\{1,2,3,4\},\G_1=\{V_1,V_2,V_3,V_4\}$ where $V_1=\{1,2\},V_2=\{1,2\},V_3=\{1,1,3,3\}$ and $V_4=\{3,4\}$. Observe that both $V_1$ and $V_2$ have the same set of vertices, however they are considered as different polygons in the configuration. If we define the multiplicity function as $\mu(3)=\mu(4)=1$ and $\mu(1)=\mu(2)=2$, we see that the vertex 4 is truncated. For the orientation $\oo$ we chose the following successor sequences.\[\begin{array}{rcl}1 & : & V_1<V_2<V_3<V_3;\\2 & : & V_1<V_2;\\3 & : & V_3<V_4<V_3.\end{array}\] Also observe that in this orientation, and any other chosen orientation, the number of times that a polygon appears in a successor sequence must coincide with the number of times that the associated vertex appears in the polygon.
\end{ejem}
For $\alpha\in\G_0$ a nontruncated vertex with $\val(\alpha)>1$, let $\alpha:V_{i_1}<\cdots<V_{i_t}$ be a successor sequence of $\alpha$, where $t=\val(\alpha)$. We say that $V_{i_{j+1}}$ is the \textit{successor} of $V_{i_j}$, for all $1\le j\le t$, and $V_{j_{t+1}}=V_{i_1}$. If $\val(\alpha)=1$ but $\mu(\alpha)>1$, and $V$ is the only polygon where $\alpha$ belongs, then we say that $V$ is its own successor at $\alpha$. We can also have that a polygon is its own successor at a vertex $\alpha$ where $\val(\alpha)>1$. This is the case for the polygon $V_3$ in the Example \ref{002} above. It is its own successor at the vertex 1.

Now, we define the induced quiver by a Brauer configuration. For $\G$ a Brauer configuration let $\Q$ be the quiver induced by
\begin{itemize}
\item The set of vertices of $\Q$ is in one-to-one correspondence with the set of polygons of $\G$. If $V$ is a polygon in $\G_1$, we will denote the associated vertex in $\Q$ by $v$, and we say that $v$ is the vertex in $\Q$ associated to $V$.
\item If the polygon $V'$ is a successor of the polygon $V$ at $\alpha$, then there is an arrow from $v$ to $v'$, where $v$ is the vertex in $\Q$ associated to $V$, and $v'$ is the vertex in $\Q$ associated to $V'$.
 \end{itemize}
If we denote by $\T_{\G}$ the set of all truncated vertices of $\G$ and $\Q_1$ the collection of all the arrows in the induced quiver $Q$, then it is not difficult to see that \[|\Q_1|=\sum_{\alpha\in\G_0\setminus\T_{\G}}\val(\alpha)=\sum_{\alpha\in\G_0}\val(\alpha)-|\T_{\G}|.\] For each nontruncated vertex $\alpha$ in $\G_0$ with $\val(\alpha)=t>1$ and successor sequence $\alpha:V_{i_1}<\cdots<V_{i_t}$, we have a corresponding sequence of arrows in the induced quiver $\Q$

\begin{equation}\label{004}
v_{i_1}\xrightarrow{a^{(\alpha)}_{j_1}} v_{i_2} \xrightarrow{a^{(\alpha)}_{j_2}} \cdots \xrightarrow{a^{(\alpha)}_{j_{t-1}}}v_{i_{t}}\xrightarrow{a^{(\alpha)}_{j_{t}}} v_{i_1}.
\end{equation}
Let $C_l=a^{(\alpha)}_{j_{l}}a^{(\alpha)}_{j_{l+1}}\cdots a^{(\alpha)}_{j_{t}}a^{(\alpha)}_{j_{1}}\cdots a^{(\alpha)}_{j_{l-1}}$ be the oriented cycle in $\Q$, for $1\le l\le t$. We call any of these cycles a \textit{special} $\alpha$-\textit{cycle}. We observe that when $\alpha$ is a nontruncated vertex such that $\val(\alpha)=1$, we have only one special $\alpha$-cycle, which is a loop at the vertex in $\Q$ associated to the unique polygon containing $\alpha$. Now, let $V$ be a fixed polygon in $\G_1$ such that $\alpha$ is a vertex in $V$ and $\occ(\alpha,V)=s\ge1$. Then there are $s$ indices $l_1,\ldots,l_s$ such that $V=V_{i_{l_r}}$, for every $1\le r\le s$. We call any of the cycles $C_{l_1},\ldots,C_{l_s}$ a \textit{special} $\alpha$-\textit{cycle at} $v$, and we denote the collection of these cycles in $\Q$ by $\C_{(\alpha)}^{\,v}$. If we denote by $\CC_{(\alpha)}$ the collection of all the special $\alpha$-cycles in $\Q$ and define $\V_{(\alpha)}=\{\,V\in\G_1\,|\,\alpha\textrm{ occurs in }V\,\}$, the set of polygons where $\alpha$ occurs, then it is easy to prove that
\begin{equation}\label{005}\CC_{(\alpha)}=\bigcup_{V\in\V_{(\alpha)}}\C_{(\alpha)}^{\,v}.\end{equation} Once again, for the particular case of a nontruncated vertex $\alpha$ such that $\val(\alpha)=1$, the collection $\CC_{(\alpha)}$ is just the set consisting of the unique loop at the vertex in $\Q$ associated to the unique polygon where $\alpha$ occurs.

\begin{ejem}\label{007}
The quiver $\Q$ induced by the configuration $\G$ of Example \ref{002} is
\begin{equation}\label{006}
\begin{split}
\xymatrix{ & & v_4\ar@/_1.5pc/[dd]_{a^{(3)}_2} & & \\  & & & & \\  & &v_3\ar@/_1pc/[lldd]_{a^{(1)}_4}\ar@(ul,ur)^{a^{(3)}_3}\ar@(dr,dl)^{a^{(1)}_3}\ar@/_1.5pc/[uu]_{a^{(3)}_1} & &\\ & & & \\v_1\ar@/_2.4pc/[rrrr]_{a^{(1)}_1}\ar@/_1pc/[rrrr]^{a^{(2)}_1} & & & & v_2\ar@/_1pc/[uull]_{a^{(1)}_2}\ar@/_1pc/[llll]_{a^{(2)}_2}}
\end{split}
\end{equation}
As we can see, every arrow is induced by a successor sequence. For example, the successor sequence at vertex 1 induces in $\Q$ the sequence of arrows \[1:v_1\xrightarrow{a^{(1)}_1} v_2\xrightarrow{a^{(1)}_2}v_3\xrightarrow{a^{(1)}_3}v_3\xrightarrow{a^{(1)}_4}v_1;\] and the successor sequence at 3 induces the sequence of arrows \[3:v_3\xrightarrow{a^{(3)}_1}v_4\xrightarrow{a^{(3)}_2}v_3\xrightarrow{a^{(3)}_3}v_3.\]
\end{ejem}
Let $\G=(\G_0,\G_1,\mu,\oo)$ a Brauer configuration and let $V$ be a polygon of $\G$. We denote by $\overline{V}$ the collection of vertices defined by \begin{equation}\label{a01}\overline{V}:=\{\alpha\in\G_0\,|\,\alpha\textrm{ occurs in }V\}.\end{equation} That is, $\overline{V}$ is the set of the vertices that occur in $V$. The collection $\overline{V}$ is always a set, while $V$ is a multiset.\\
 
 For the induced quiver $\Q$ of the Brauer configuration $\G=(\G_0,\G_1,\mu,\oo)$ define the set
\begin{equation}\label{008}
\CC:=\bigcup_{\alpha\in\G_0\setminus\T_{\G}}\CC_{(\alpha)},
\end{equation}
 and let $f:\CC\to\Q_1$ be the map which sends a special cycle to its first arrow. Now, let $K$ be a field and consider in the path algebra $K\Q$ the following type of relations.
 
\vspace{0.4cm}
 
\noindent\textit{Relations of type one.} It is the subset of $K\Q$ \[\bigcup_{V\in\G_1}\left(\bigcup_{\alpha,\beta\in \overline{V}\setminus\T_{\G}}\left\{\,C^{\mu(\alpha)}-D^{\mu(\beta)}\,|\,C\in\C_{(\alpha)}^{\,v},D\in\C_{(\beta)}^{\,v}\,\right\}\right).\]
\\
\textit{Relations of type two.} It is the subset of $K\Q$ \[\bigcup_{\alpha\in\G_0\setminus\T_{\G}}\left\{\,C^{\mu(\alpha)}f(C)\,|\,C\in\CC_{(\alpha)}\,\right\}.\]
\\
\textit{Relations of type three.} It is the set of all quadratic monomial relations of the form $ab$ in $K\Q$ where $ab$ is not a subpath of any special cycle.\\

\noindent We denote by $\rho_{\G}$ the union of these three types of relations.

\begin{defin}\label{009}
Let $K$ be a field and $\G$ a Brauer configuration. The \textit{Brauer configuration algebra} $\L$ \textit{associated to} $\G$ is defined to be $K\Q/I$, where $\Q$ is the quiver induced by $\G$ and $I$ is the ideal in $K\Q$ generated by the set of relations $\rho_{\G}$.
\end{defin}

\begin{ejem}
Let $\G=(\G_0,\G_1,\mu,\oo)$ be the Brauer configuration given by \[\G_0=\left\{1\right\}, \G_1=\left\{V\right\},\] where $V=\left\{1,1\right\}$, $\mu\equiv1$ and the cyclic order $\oo$ is defined by the only successor sequence \[1:V<V.\] Let $\Q$ be the quiver induced by the sequence of arrows \[1:v\xrightarrow{a}v\xrightarrow{b}v.\] That is, the quiver $\Q$ induced by $\G$ is given by \[\xymatrix{v\ar@(ul,dl)_{b}\ar@(dr,ur)_{a}}.\] Observe that there are only two special $1$-cycles: $ab$ and $ba$. Also observe that the respective relations in $K\Q$ are given by:

\begin{itemize}
\item {\it Relations of type one:} $ab-ba.$
\item {\it Relations of type two:} $aba\textrm{ and }bab.$
\item {\it Relations of type three:} $a^2\textrm{ and }b^2.$
\end{itemize}

If $\L_{\G}$ is the Brauer configuration algebra induced by $\G$, then by the relation of type one we can see that this algebra is commutative. In fact, this Brauer configuration algebra is isomorphic to the algebra \[K[x_1,x_2]/\langle x_1^2,x_2^2\rangle.\]
\end{ejem}

\section{Basis in $v\L w$}\label{sec3}
If $\L$ is the Brauer configuration algebra associated to the Brauer configuration $\G$, $V$ is a polygon of $\G$ and $v$ is the vertex associated to the polygon $V$, a $K$-basis for the space $v\L v$ has been already completely determined in \cite[Proposition 3.3]{mya}. In this section we determine a $K$-basis for the space $v\L w$, where $v$ and $w$ are different vertices associated to the polygons $V$ and $W$, respectively. In order to this, we will apply the same combinatorial technique presented in \cite[Section 3]{mya} with a few adjustments. But first, let's do a brief remind of the notation and terminology.\\

Let $\G$ be a Brauer configuration and let $\Q$ be the induced quiver associated to $\G$. For $\alpha\in\G_0$ a fixed vertex with $\val(\alpha)>1$, let $\alpha:V_{i_1}<\cdots<V_{i_{\val(\alpha)}}$ be its successor sequence. Then in the quiver $\Q$ we have a sequence of arrows

\begin{equation}\label{017}
v_{i_1}\stackrel{a^{(\alpha)}_{j_1}}{\longrightarrow} v_{i_2} \stackrel{a^{(\alpha)}_{j_2}}{\longrightarrow} \cdots \stackrel{a^{(\alpha)}_{j_{\val(\alpha)-1}}}{\longrightarrow}v_{i_{\val(\alpha)}}\stackrel{a^{(\alpha)}_{j_{\val(\alpha)}}}{\longrightarrow} v_{i_1}.
\end{equation}
 Now, if $V$ is a polygon which appears in the successor sequence of $\alpha$ and $\occ(\alpha,V)>1$ then the sequence of arrows in (\ref{017}) can be transformed and represented by the following \textit{Special and Non-Special diagram associated to} $\alpha$.
 
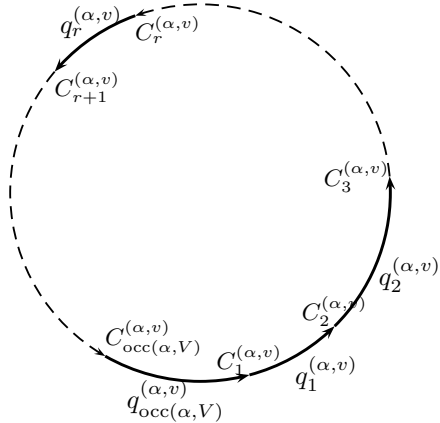
\begin{figure}[H]
\centering
 \begin{pspicture}(-3.2,-3.2)(3.2,3.2)
  \psarc[linewidth=1.1pt]{->}(0,0){2.5}{-120}{-75}
  \psarc[linewidth=1.1pt]{->}(0,0){2.5}{-75}{-45}
  \psarc[linewidth=1.1pt]{->}(0,0){2.5}{-45}{5}
  \psarc[linewidth=0.7pt,linestyle=dashed]{->}(0,0){2.5}{5}{110}
  \psarc[linewidth=1.1pt]{->}(0,0){2.5}{110}{140}
  \psarc[linewidth=0.7pt,linestyle=dashed]{->}(0,0){2.5}{140}{240}
  \rput[t](-0.326,-2.479){\footnotesize $q^{(\alpha,v)}_{\occ(\alpha,V)}$}
  \rput[tl](1.250,-2.165){\footnotesize $q^{(\alpha,v)}_{1}$}
  \rput[tl](2.349,-0.855){\footnotesize $q^{(\alpha,v)}_{2}$}
  \rput[b](-1.434,2.048){\footnotesize $q^{(\alpha,v)}_{r}$}
  \rput[bl](-1.250,-2.165){\scriptsize $C^{(\alpha,v)}_{\occ(\alpha,V)}$}
  \rput[b](0.647,-2.415){\scriptsize $C^{(\alpha,v)}_{1}$}
  \rput[b](1.768,-1.768){\scriptsize $C^{(\alpha,v)}_{2}$}
  \rput[r](2.490,0.218){\scriptsize $C^{(\alpha,v)}_{3}$}
  \rput[tl](-0.855,2.349){\scriptsize $C^{(\alpha,v)}_{r}$}
  \rput[tl](-1.915,1.607){\scriptsize $C^{(\alpha,v)}_{r+1}$}
 \end{pspicture}
 \caption{\small Special and Non-special diagram associated to $\alpha$.}\label{fig3}
\end{figure}

As we can see, the collection $\C^{\,v}_{(\alpha)}=\left\{C^{(\alpha,v)}_{1},\ldots,C^{(\alpha,v)}_{\occ(\alpha,V)}\right\}$ is formed by all the special $\alpha$-cycles at $v$ and the collection $\neg\C^{\,v}_{(\alpha)}=\left\{q^{(\alpha,v)}_{1},\ldots,q^{(\alpha,v)}_{\occ(\alpha,V)}\right\}$ by all the non-special $\alpha$-cycles at $v$.

We divide in \textit{intervals} the diagram in Fig. \ref{fig3} according to the order in which the vertex $v$ occurs on the diagram\footnote{Remember that after to fix one of the occurrences of the vertex $v$ we label it as \textit{1st} $v$ and then in counter clockwise continue labeling the other occurrences of $v$ as \textit{2nd} $v$, \textit{3rd} $v\ldots$ etc \cite[Pages 296 and 297]{mya}.}. This division is made as follows:
\begin{itemize}
\item \textit{1st} $(\alpha,v)$-\textit{interval}: it is the segment formed by all the vertices in $\Q$ that appear, in the same order of occurrence and repetitions included, in the special and non-special diagram associated to $\alpha$ between the \textit{1st} $v$ and the \textit{2nd} $v$.

\item \textit{2nd} $(\alpha,v)$-\textit{interval}: it is the segment formed by all the vertices in $\Q$ that appear, in the same order of occurrence and repetitions included, in the special and non-special diagram associated to $\alpha$ between the \textit{2nd} $v$ and the \textit{3rd} $v$.
\item And so on$\ldots$
\end{itemize}

When $\alpha$ and $v$ are clear from the context we just call them intervals instead of $(\alpha,v)$-intervals. Now, in Fig. \ref{fig3} we delete all the $q^{(\alpha,v)}_{i}$\,'s and instead of the $C^{(\alpha,v)}_i$\,'s we put the corresponding $i$-\textit{th} $v$'s, and setting $s=\occ(\alpha,V)$, the diagram that represents all the intervals it would look like  Fig. \ref{fig4}.

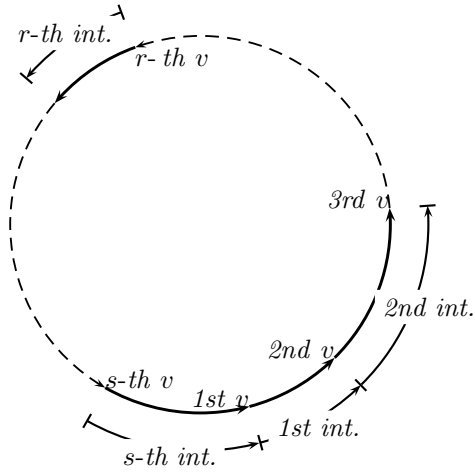
\begin{figure}[H]
\centering
 \begin{pspicture}(-3.2,-3.2)(3.2,3.2)
  \psarc[linewidth=1.1pt]{->}(0,0){2.5}{-120}{-75}
  \psarc[linewidth=1.1pt]{->}(0,0){2.5}{-75}{-45}
  \psarc[linewidth=1.1pt]{->}(0,0){2.5}{-45}{5}
  \psarc[linewidth=0.7pt,linestyle=dashed]{->}(0,0){2.5}{5}{110}
  \psarc[linewidth=1.1pt]{->}(0,0){2.5}{110}{140}
  \psarc[linewidth=0.7pt,linestyle=dashed]{->}(0,0){2.5}{140}{240}
  
  \psarc[linewidth=0.8pt]{|->|}(0,0){3}{-75}{-45}
  \psarc[linewidth=0.8pt]{->|}(0,0){3}{-45}{5}
  \psarc[linewidth=0.8pt]{|->|}(0,0){3}{110}{140}
  \psarc[linewidth=0.8pt]{|->}(0,0){3}{-120}{-75}
  
  \rput*(1.55,-2.685){\footnotesize\textit{1st int.}}
  \rput*(3.007,-1.094){\footnotesize\textit{2nd int.}}
  \rput*(-1.778,2.539){\footnotesize $r$-\textit{th int.}}
  \rput*(-0.405,-3.073){\footnotesize $s$-\textit{th int.}}
  
  \rput[br](0.647,-2.415){\footnotesize\textit{1st} $v$}
  \rput[br](1.768,-1.768){\footnotesize\textit{2nd} $v$}
  \rput[br](2.490,0.218){\footnotesize\textit{3rd} $v$}
  \rput[tl](-0.855,2.349){\footnotesize $r$-\,\textit{th} $v$}
  \rput[bl](-1.250,-2.165){\footnotesize $s$-\textit{th} $v$}
 \end{pspicture}
 \caption{\small The $(\alpha,v)$-intervals}\label{fig4}
\end{figure}

Now, if $W\in\V_{(\alpha)}$, where $\V_{(\alpha)}=\left\{W\in\G_1\,|\,\alpha\textrm{ occurs in }W\right\}$, with $W\neq V$, and $w$ is the vertex in $\Q$ associated to $W$, we denote by $\occ^{(\alpha,v)}_i(w)$ the number of times that the vertex $w$ occurs in the $i$-th interval, for every $1\le i\le\occ(\alpha,V)$. It is clear that $\occ^{(\alpha,v)}_i(w)\ge0$, for all $1\le i\le\occ(\alpha,V)$, and \begin{equation}\label{024}\sum_{i=1}^{\occ(\alpha,V)}\occ^{(\alpha,v)}_i(w)=\occ(\alpha,W).\end{equation}

\begin{lem}\label{015}
Let $\L=K\Q/I$ be the Brauer configuration algebra associated to the Brauer configuration $\G$. Let $V,W$ be polygons in $\G_1$, and let $v,w$ be the respective associated vertices in $\Q$. Then \[v\L w\neq0\iff\overline{V}\cap\overline{W}\neq\emptyset.\]
\end{lem}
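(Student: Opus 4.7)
The plan is to prove each implication directly by leveraging the combinatorial description of the three families of generators of $I$.

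For $(\Leftarrow)$, the case $V=W$ is immediate: $\overline{V}\cap\overline{W}=\overline{V}$ is nonempty by condition C2, and the vertex idempotent $v$ belongs to $v\Lambda v$ and is nonzero in $\Lambda$ (no generator of $I$ is a pure vertex). When $V\neq W$, pick any $\alpha\in\overline{V}\cap\overline{W}$; since $\alpha$ occurs in two distinct polygons, $\val(\alpha)\ge 2$ and $\alpha$ is nontruncated. Both $V$ and $W$ then appear in the $\alpha$-successor sequence, so traversing the induced arrows in a single forward pass around the special $\alpha$-cycle produces a path $p\colon v\to w$ of length at most $\val(\alpha)-1$. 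This length is strictly smaller than $\val(\alpha)\mu(\alpha)+1$, so $p$ cannot contain a type-two generator $C^{\mu(\alpha)}f(C)$ as a subpath; it is itself a subpath of a special $\alpha$-cycle, so no type-three relation applies; and any full power $C^{\mu(\alpha)}$ through which a type-one identification could be performed has length at least $\val(\alpha)>|p|$, so type one does not apply either. Hence $p$ is nonzero in $v\Lambda w$.

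For $(\Rightarrow)$, assume $v\Lambda w\neq 0$ and pick a path $p\colon v\to w$ in $K\Q$ whose class in $\Lambda$ is nonzero. If $v=w$ then $V=W$ and $\overline{V}\cap\overline{W}\neq\emptyset$ by C2. Otherwise write $p=a_1a_2\cdots a_n$ with $n\ge 1$. For $p$ to avoid the type-three relations, each consecutive pair $a_ia_{i+1}$ must be a subpath of some special cycle. Each arrow in $\Q$ carries a unique superscript -- the vertex whose successor sequence produced it -- and a subpath of a special $\alpha$-cycle is composed exclusively of arrows with that superscript. Since $a_{i+1}$ has a single superscript $\alpha$, both the special cycle containing $a_ia_{i+1}$ and the one containing $a_{i+1}a_{i+2}$ must be $\alpha$-cycles, forcing $a_i$ and $a_{i+2}$ to share the superscript $\alpha$ as well. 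Iterating, every arrow of $p$ has the same superscript $\alpha$, so $p$ lies inside a single $\alpha$-successor sequence. Its source $v$ then corresponds to a polygon $V$ containing $\alpha$, and its target $w$ to a polygon $W$ containing $\alpha$, giving $\alpha\in\overline{V}\cap\overline{W}$.

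The delicate step is the non-vanishing claim in the forward direction, where one must rule out every way in which the three relation families could kill the chosen short traversal $p$. The argument hinges on the length bound $|p|\le\val(\alpha)-1$, which simultaneously keeps $p$ shorter than every type-two generator and shorter than any full power $C^{\mu(\alpha)}$ through which a type-one identification could act; meanwhile, the fact that $p$ lies inside a single special cycle automatically handles the type-three relations.
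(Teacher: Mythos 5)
Your proof is correct and follows the same route as the paper's: for the backward implication one walks along a special $\alpha$-cycle from an occurrence of $v$ to one of $w$, and for the forward implication a path surviving the relations must lie inside a single $\alpha$-successor sequence, forcing $\alpha\in\overline{V}\cap\overline{W}$. You simply supply the details (the length bound ruling out type-one and type-two generators, and the superscript argument for type three) that the paper's three-sentence proof leaves implicit.
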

\begin{proof}
If $v\L w\neq0$ then there exists a path from $v$ to $w$ contained in some $\alpha$-cycle. Hence $\alpha\in\overline{V}\cap\overline{W}$. Conversely, if $\alpha\in\overline{V}\cap\overline{W}$ then both vertices $v$ and $w$ occur in any $\alpha$-cycle, therefore we have at least a path from $v$ to $w$ such that its class in $\L$ is not zero, hence $v\L w\neq0$.
\end{proof}
\begin{propo}\label{018}
Let $\L=K\Q/I$ be the Brauer configuration algebra induced by $\G=(\G_0,\G_1,\mu,\oo)$. If $V,W\in\G_1$, with $V\neq W$, then \[\textrm{dim}_Kv\L w=\sum_{\alpha\in\overline{V}\cap\overline{W}}\mu(\alpha)\occ(\alpha,V)\occ(\alpha,W),\] where $v$ and $w$ are the vertices in $\Q$ associated to $V$ and $W$ respectively.
\end{propo}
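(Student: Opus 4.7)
The plan is to construct an explicit $K$-basis of $v\L w$ whose cardinality equals the claimed sum. By Lemma \ref{015}, I may assume $\ov V\cap\ov W\neq\emptyset$, otherwise both sides vanish. For each $\alpha\in\ov V\cap\ov W$, I use the Special and Non-Special diagram of Figure \ref{fig3} associated to $\alpha$ to enumerate the relevant paths from $v$ to $w$.

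For each $\alpha\in\ov V\cap\ov W$ and each special $\alpha$-cycle $C\in\C_{(\alpha)}^{\,v}$ at $v$ (of which there are $\occ(\alpha,V)$), the iterate $C^{\mu(\alpha)}$ is a nonzero closed path at $v$, since the only relation capping its length is the type-two relation $C^{\mu(\alpha)}f(C)=0$. Within its $\mu(\alpha)$ successive laps, the vertex $w$ is traversed exactly $\mu(\alpha)\occ(\alpha,W)$ times, and each such visit determines a nonzero prefix of $C^{\mu(\alpha)}$ that is a path from $v$ to $w$. Assembling these prefixes over all such $C$ and all $\alpha\in\ov V\cap\ov W$ produces a set $\B$ of $\sum_{\alpha}\mu(\alpha)\occ(\alpha,V)\occ(\alpha,W)$ paths in $v\L w$, and I claim $\B$ is a $K$-basis.

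Spanning is the easier direction: a nonzero path from $v$ to $w$ in $\L$ cannot contain a pair of consecutive arrows outside any special cycle (type three), so once its first arrow is fixed the path is forced to follow the unique special $\alpha$-cycle beginning with that arrow, and it can comprise at most $\mu(\alpha)\val(\alpha)$ arrows by type two. Such a path is therefore one of the prefixes listed in $\B$.

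The hard part is linear independence modulo $I$. Since the elements of $\B$ are pairwise distinct paths in $K\Q$, it suffices to check that the slice $vIw$ contains no nontrivial linear combination of them. The essential observation is that for $v\neq w$ this slice is generated only by type-three monomials: a type-one generator $C^{\mu(\alpha)}-D^{\mu(\beta)}$ is a difference of closed paths at a common vertex $u$, and any left or right multiplication that lands in $v(K\Q)w$ forces the factor $C^{\mu(\alpha)}$ to be extended past its admissible length $\mu(\alpha)\val(\alpha)$, which makes it vanish by type two or type three; an analogous argument dispatches the type-two generators. Consequently $vIw$ is spanned by monomials containing a pair of consecutive arrows outside any special cycle, and none of these is a prefix of a special-cycle iterate. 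Hence $\B$ is linearly independent, and the dimension formula follows by counting.
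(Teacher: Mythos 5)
Your proof is correct and rests on the same underlying idea as the paper's: for each $\alpha\in\ov{V}\cap\ov{W}$, enumerate the nonzero paths from $v$ to $w$ supported on the special $\alpha$-cycles. Only the bookkeeping differs. The paper fixes an occurrence of $w$ in an $(\alpha,v)$-interval of the Special and Non-Special diagram and lists, for each of the $\occ(\alpha,V)$ occurrences of $v$ and each exponent $0\le k<\mu(\alpha)$, a path of the form $\bigl(C^{(\alpha,v)}_{i}\bigr)^{k}q^{(\alpha,v)}_{i}\cdots q_{(v,w)}$, then sums over the intervals; you instead fix a special cycle $C\in\C_{(\alpha)}^{\,v}$ and take all prefixes of $C^{\mu(\alpha)}$ ending at $w$. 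These are the same $\mu(\alpha)\occ(\alpha,V)\occ(\alpha,W)$ paths, grouped by starting cycle rather than by terminal occurrence of $w$. What your write-up adds --- and the paper delegates to the combinatorial technique it cites --- is the verification that this list is actually a $K$-basis of $v\L w$: your observation that for $v\neq w$ the type-one generators contribute nothing to $vIw$ (extending $C^{\mu(\alpha)}$ by any arrow kills it by a type-two or type-three relation), so that $vIw$ is spanned by monomials which are never prefixes of special-cycle powers, is exactly the right point and makes the dimension count self-contained. One small imprecision: after disposing of the type-one generators, $vIw$ is spanned by monomial multiples of \emph{both} type-two and type-three generators, not only by monomials containing a bad consecutive pair; but your closing argument (no such monomial is a prefix of any $C^{\mu(\alpha)}$ based at $v$) covers both kinds, so nothing breaks.
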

\begin{proof}
Let $V,W\in\G_1$ be two different polygons of the configuration, and let $v$ and $w$ the vertices in $\Q$ associated to $V$ and $W$, respectively. By Lemma \ref{015} we have that dim$_Kv\L w=0\iff\overline{V}\cap\overline{W}=\emptyset$. Thus, let $\alpha$ be a vertex in $\overline{V}\cap\overline{W}$ and first let's suppose that $\occ(\alpha,V)=1$ and $\occ(\alpha,W)\ge1$. In this case, we have an only special $\alpha$-cycle at $v$, given by $C^{(\alpha,v)}_1$. Let $w$ be one fixed vertex of those that appear in the special and non-special diagram associated to $\alpha$. If we denote by $q_{(v,w)}$ the composition of all the arrows in the special and non-special diagram between $v$ and $w$, we obtain $\mu(\alpha)$ paths in $\Q$ from $v$ to $w$ of the form $\left(C^{(\alpha,v)}_1\right)^kq_{(v,w)}$, for each $0\le k<\mu(\alpha)$. Now, if we do the same for the rest of $w$'s in the special and non-special diagram associated to $\alpha$ we obtain a total of $\mu(\alpha)\occ(\alpha,W)=\mu(\alpha)\occ(\alpha,V)\occ(\alpha,W)$ paths from $v$ to $w$ in $\Q$ associated to the vertex $\alpha$.

Now, suppose that $\occ(\alpha,V)>1$, $\occ(\alpha,W)\ge1$, and set $s=\occ(\alpha,V)$. Without loss of generality, also suppose that  $\occ^{(\alpha,v)}_s(w)>0$, i.e, there is at least one $w$ in the $s$-th interval in the special and non-special diagram associated to $\alpha$. Let $w$ be one of the occurrences of this vertex in the $s$-th interval, and let $q_{(v,w)}$ be the composition of all the arrows in the $s$-th interval between the $s$-\textit{th} $v$ and $w$. This is represented in Fig. \ref{fig2} 
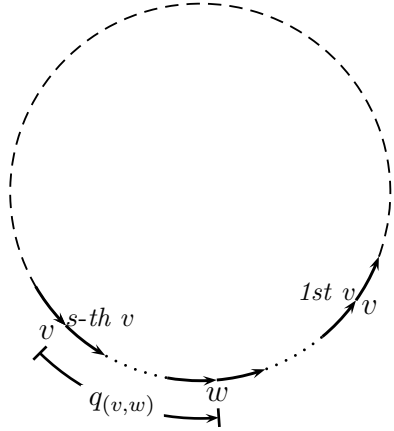
\begin{figure}[H]
\centering
 \begin{pspicture}(-3.2,-3.2)(3.2,3.2)
  \psarc[linewidth=1.1pt]{->}(0,0){2.5}{-150}{-135}
  \psarc[linewidth=1.1pt]{->}(0,0){2.5}{-135}{-120}
  \psarc[linewidth=1.1pt,linestyle=dotted]{-}(0,0){2.5}{-120}{-100}
  \psarc[linewidth=1.1pt]{->}(0,0){2.5}{-100}{-85}
  \psarc[linewidth=1.1pt]{->}(0,0){2.5}{-85}{-70}
  \psarc[linewidth=1.1pt,linestyle=dotted]{-}(0,0){2.5}{-70}{-50}
  \psarc[linewidth=1.1pt]{->}(0,0){2.5}{-50}{-35}
  \psarc[linewidth=1.1pt]{->}(0,0){2.5}{-35}{-20}
  \psarc[linewidth=0.7pt,linestyle=dashed]{->}(0,0){2.5}{-20}{225}
  \rput[bl](-1.768,-1.768){\footnotesize $s$-\textit{th} $v$}
  \rput[tr](-1.768,-1.768){$\,\,v\,\,$}
  \rput[t](0.227,-2.590){$\,\,w\,\,$}
  \rput[br](2.048,-1.434){\footnotesize\textit{1st} $v$}
  \rput[tl](2.048,-1.434){$\,v\,$}
  \psarc[linewidth=1.01pt]{|->|}(0,0){3}{-135}{-85}
  \rput*(-1.026,-2.819){\small $q_{(v,w)}$}
  \end{pspicture}
  \caption{\small Here $q_{(v,w)}$ represents the composition of all the arrows in the special and non-special diagram associated to $\alpha$, between the $s$-\textit{th} $v$ and the fixed vertex $w$ which is in the $s$-th interval.}\label{fig2}
 \end{figure}
 Then all the possible paths in $\Q$ associated to $\alpha$ that start at $v$, any $v$ of the special and non-special diagram, and finish at this vertex $w$ can be listed as
 \[\begin{array}{l}
 \left(C^{(\alpha,v)}_{s}\right)^kq_{(v,w)}\\
 \left(C^{(\alpha,v)}_{s-1}\right)^kq^{(\alpha,v)}_{s-1}q_{(v,w)}
 \end{array}
 \]
 
 \[\vdots\]
 
 \[\begin{array}{l}
 \left(C^{(\alpha,v)}_{2}\right)^kq^{(\alpha,v)}_{2}\cdots q^{(\alpha,v)}_{s-1}q_{(v,w)}\\
 \left(C^{(\alpha,v)}_{1}\right)^kq^{(\alpha,v)}_{1}\cdots q^{(\alpha,v)}_{s-1}q_{(v,w)}
 \end{array}\]
 for each $0\le k<\mu(\alpha)$. So, we obtain a total of $\mu(\alpha)\occ(\alpha,V)$ elements from all these lists. Now, if we do the same for the rest of the $w$'s that are in the $s$-th interval we will obtain $\mu(\alpha)\occ(\alpha,V)\occ^{(\alpha,v)}_s(w)$ paths in $\Q$ associated to $\alpha$ that start at $v$ and finish at $w$, where $w$ is a vertex in the $s$-th interval. Finally, doing this very same reasoning for the rest of the $(\alpha,v)$-intervals in the special and non-special diagram associated to $\alpha$, we can conclude that the total number of paths in $\Q$ associated to the vertex $\alpha$ that start at $v$ and finish at $w$ is equal to
 \begin{eqnarray*}
 \sum_{i=1}^{s}\mu(\alpha)\occ(\alpha,V)\occ^{(\alpha,v)}_i(w) & = & \mu(\alpha)\occ(\alpha,V)\sum_{i=1}^{s}\occ^{(\alpha,v)}_i(w)\\
  & = & \mu(\alpha)\occ(\alpha,V)\occ(\alpha,W).
 \end{eqnarray*}
 Therefore it follows that \[\textrm{dim}_Kv\L w=\sum_{\alpha\in\overline{V}\cap\overline{W}}\mu(\alpha)\occ(\alpha,V)\occ(\alpha,W).\]
\end{proof}

\section{The Cartan matrix of a Brauer configuration algebra}\label{sec5}

We start this section by presenting a brief description of the Cartan matrix of a basic finite dimensional associative $K$-algebra $\L$ with respect to a complete set of primitive orthogonal idempotents. We also present a short proof about the fact that if $\L$ is a symmetric algebra then the associated Cartan matrix is symmetric. We finalize the section by expressing the Cartan matrix of $\L$  when it is a Brauer configuration algebra (see Proposition \ref{046}).

In this section we assume that $K$ is an algebraically closed field.

\begin{defin}\label{010}
Let $\L$ be a basic finite dimensional associative $K$-algebra with a complete set $\{e_1,\ldots,e_n\}$ of primitive orthogonal idempotents. Let $\CM_{\L}=\left(c_{i,j}\right)$ be the $n\times n$ matrix with entries in $\Z_{\ge0}$ where \[c_{ij}=\dim_Ke_j\L e_i,\textrm{ for }1\le i,j\le n.\] This matrix is called the \textit{Cartan matrix} of $\L$.
\end{defin}

If $\{e'_1,\ldots,e'_m\}$ is another complete set of primitive orthogonal idempotents of $\L$, then by the Krull-Schmidt theorem it follows that $m=n$, and if $\mathcal{C}'=\left(c'_{ij}\right)$ is a $n\times n$ matrix given by \[c'_{ij}=\dim_Ke'_j\L e'_i,\textrm{ for }1\le i,j\le n,\] then $\mathcal{C}'$ is obtained from $\CM_{\L}$ by a permutation of its rows and columns. Thus, when we refer to the Cartan matrix of a $K$-algebra $\L$ it means the Cartan matrix defined with respect to a given complete set $\{e_1,\ldots,e_n\}$ of primitive orthogonal idempotents of $\L$, and where $\L$ is basic and finite dimensional.\\

If $M$ is a finitely generated (right) $\L$-module its \textit{dimension vector} is defined as the vector given by \[\textrm{\bf dim}\,M=\left(\textrm{dim}_KMe_1,\ldots,\textrm{dim}_KMe_n\right)^t.\](See \cite[Definition III.3.1]{rep1}) .

If $P_1,\ldots,P_n$ is the collection of all indecomposable projective modules of $\L$, we can assume that $P_i=e_i\L$ for every $1\le i\le n$. Then, we obtain that the Cartan matrix of $\L$ can be expressed as\[\CM_{\L}=\left(\,\textrm{\bf dim}\,P_1\,\cdots\,\textrm{\bf dim}\,P_n\,\right).\] With this list of indecomposable projective modules  it follows that the collection $S_1,\ldots,S_n$ of simple $\L$-modules would be given by \[S_i=e_i\L/\rad e_i\L,\textrm{ for all }1\le i\le n.\]

Now, if $I_1,\ldots,I_n$ is the list of all indecomposable injective modules of $\L$, we can also assume that $I_i=\hom_K(\L e_i,K)$, for each $1\le i\le n$.
 
if $\L$ is a \textit{symmetric} algebra, then there exists an isomorphism of modules \[P_i\cong I_i,\textrm{ for each }1\le i\le n.\]

We have the following proposition.
\begin{propo}\label{020}
Let $\L$ be a basic finite dimensional $K$-algebra. If $\L$ is a symmetric algebra, then its Cartan matrix $\CM_{\L}$ is symmetric.
\end{propo}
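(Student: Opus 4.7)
The plan is to compare, for each pair $(i,j)$, the $j$-th entry of the dimension vector of $P_i$ with the $j$-th entry of the dimension vector of $I_i$, and conclude via the symmetric-algebra isomorphism $P_i\cong I_i$ that these agree, which is precisely the symmetry of $\CM_{\L}$.

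First, I would rewrite the Cartan entries in terms of the projectives. Since $P_i=e_i\L$, one has $P_ie_j=e_i\L e_j$, and therefore the $j$-th component of $\textrm{\bf dim}\,P_i$ equals $\dim_K e_i\L e_j=c_{ji}$. In other words, the entry in row $j$, column $i$ of $\CM_{\L}$ can be read off directly from $\textrm{\bf dim}\,P_i$.

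Next, I would carry out the analogous computation for $I_i=\hom_K(\L e_i,K)$ endowed with its natural right $\L$-action $(f\cdot a)(x)=f(ax)$. Using the $K$-space decomposition $\L e_i=\bigoplus_k e_k\L e_i$, the functional $f\cdot e_j$ satisfies $(f\cdot e_j)(x)=f(e_jx)$, which vanishes on every summand $e_k\L e_i$ with $k\neq j$ and coincides with $f$ on $e_j\L e_i$. Restriction to $e_j\L e_i$ therefore induces a $K$-linear isomorphism $I_ie_j\cong\hom_K(e_j\L e_i,K)$, so $\dim_K I_ie_j=\dim_K e_j\L e_i=c_{ij}$; that is, the $j$-th component of $\textrm{\bf dim}\,I_i$ equals $c_{ij}$.

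Finally, invoking the symmetric-algebra fact $P_i\cong I_i$ as right $\L$-modules yields $\dim_K P_ie_j=\dim_K I_ie_j$ for all $i,j$, i.e.\ $c_{ji}=c_{ij}$, which is the desired symmetry. The only genuinely delicate step is the identification $I_ie_j\cong\hom_K(e_j\L e_i,K)$: it is important to keep track of the side on which $\L$ acts and to verify that right multiplication by $e_j$ is the same as ``restricting and extending by zero'' through the idempotent decomposition of $\L e_i$. Once this bookkeeping is in place the symmetry of $\CM_{\L}$ follows immediately.
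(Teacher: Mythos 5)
Your argument is correct and is essentially the proof the paper gives: both hinge on the symmetric-algebra isomorphism $P_i\cong I_i$ together with the identification of $\textrm{\bf dim}\,P_i$ with the $i$-th column of $\CM_{\L}$ and of $\textrm{\bf dim}\,I_i$ with the $i$-th column of $\CM_{\L}^t$. The only difference is that the paper imports these identifications from the literature (in the form $\textrm{\bf dim}\,P_i=\CM_{\L}\cdot\textrm{\bf dim}\,S_i$ and $\textrm{\bf dim}\,I_i=\CM_{\L}^t\cdot\textrm{\bf dim}\,S_i$, with the $\textrm{\bf dim}\,S_i$ forming the standard basis of $\Z^n$), whereas you verify them directly by computing $P_ie_j=e_i\L e_j$ and $I_ie_j\cong\hom_K(e_j\L e_i,K)$.
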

\begin{proof}
Let $\CM_{\L}$ denote the Cartan matrix of $\L$. By \cite[Proposition III.3.8]{rep1} we have the equalities
\begin{eqnarray*}
\textrm{\bf dim}\,P_i & = & \CM_{\L}\cdot\textrm{\bf dim}\,S_i,\\\textrm{\bf dim}\,I_i & = & \CM_{\L}^t\cdot\textrm{\bf dim}\,S_i.
\end{eqnarray*}
Because $\L$ is symmetric we obtain that $\textrm{\bf dim}\,P_i=\textrm{\bf dim}\,I_i$, and hence \[\CM_{\L}\cdot\textrm{\bf dim}\,S_i=\CM_{\L}^t\cdot\textrm{\bf dim}\,S_i.\] This equality is satisfied for all $1\le i\le n$. Now, it is a fact that the vectors $\textrm{\bf dim}\,S_1,\ldots,\textrm{\bf dim}\,S_n$ form the standard basis of the $\Z$-module $\Z^n$, therefore it follows that \[\CM_{\L}=\CM_{\L}^t.\]
\end{proof}

We now present the explicit expression of the Cartan matrix of a Brauer configuration algebra. 

\begin{propo}\label{046}
Let $\L=K\Q/I$ be the Brauer configuration algebra associated to $\G=(\G_0,\G_1,\mu,\oo)$. If $\CM_{\L}=\left(c_{v,w}\right)_{V,W\in\G_1}$ is the Cartan matrix of $\L$, then

\begin{equation*}\label{021}
c_{v,w}=\left\{\begin{array}{cr}2+\sum\limits_{\alpha\in\overline{V}}\occ(\alpha,V)\left(\occ(\alpha,V)\mu(\alpha)-1\right), & V=W;\\\sum\limits_{\alpha\in\overline{V}\cap\overline{W}}\mu(\alpha)\occ(\alpha,V)\occ(\alpha,W), & V\neq W.\end{array}\right.
\end{equation*}
\end{propo}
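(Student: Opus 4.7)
The plan is to verify the formula by splitting into the off-diagonal and diagonal cases. Recall that by Definition \ref{010}, $c_{v,w} = \dim_K e_w\L e_v$, and since Brauer configuration algebras are symmetric Proposition \ref{020} gives $\CM_\L = \CM_\L^t$, so $c_{v,w} = c_{w,v}$ and it suffices to compute $\dim_K v\L w$ in each case.

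For the off-diagonal case $V \neq W$, no further work is needed: Proposition \ref{018} already gives
\[
\dim_K v\L w = \sum_{\alpha \in \overline{V}\cap\overline{W}} \mu(\alpha)\occ(\alpha,V)\occ(\alpha,W),
\]
which is precisely the value of $c_{v,w}$ claimed in the second case of the formula.

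For the diagonal case $V = W$, I would invoke \cite[Proposition 3.3]{mya}, which exhibits an explicit $K$-basis of $v\L v$, and then count it. The basis splits into three kinds of elements: (i) the idempotent $e_v$; (ii) the socle, which by the Type 1 relations coincides with $C^{\mu(\alpha)}$ for every special $\alpha$-cycle $C$ at $v$ and every $\alpha \in \overline{V}$, giving a single common element; and (iii) for each $\alpha \in \overline{V}$, the nontrivial proper closed paths at $v$ obtained as prefixes of $C^{\mu(\alpha)}$ that start with the initial arrow of one of the $\occ(\alpha,V)$ special $\alpha$-cycles at $v$ and end at an intermediate occurrence of $v$. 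Applying the same $(\alpha,v)$-interval analysis used in the proof of Proposition \ref{018} to the Special and Non-Special diagram of $\alpha$, the vertex $v$ appears $\occ(\alpha,V)\mu(\alpha)$ times in $C^{\mu(\alpha)}$ beyond the starting position, so removing the terminal visit (already counted as the socle) leaves $\occ(\alpha,V)\mu(\alpha)-1$ admissible endpoints for each of the $\occ(\alpha,V)$ starting arrows. This contributes $\occ(\alpha,V)\bigl(\occ(\alpha,V)\mu(\alpha)-1\bigr)$ elements per $\alpha$, and together with the $e_v$ and socle contributions gives the stated expression.

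The subtle point, where I expect care is needed, is the bookkeeping around the Type 1 identifications. A naive parallel to Proposition \ref{018} applied with $V = W$ would predict $1 + \sum_\alpha \mu(\alpha)\occ(\alpha,V)^2$, but the Type 1 relations collapse all $\sum_\alpha \occ(\alpha,V)$ candidate top-power elements at $v$ into a single socle element; correcting by $-\sum_\alpha \occ(\alpha,V) + 1$ produces exactly the $+2$ constant together with the $-\occ(\alpha,V)$ term inside the sum, yielding $2 + \sum_{\alpha\in\overline{V}} \occ(\alpha,V)\bigl(\occ(\alpha,V)\mu(\alpha)-1\bigr)$ as required.
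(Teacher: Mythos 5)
Your proposal is correct and follows essentially the same route as the paper, whose proof is a one-line citation of Proposition \ref{018} for the off-diagonal entries and of \cite[Proposition 3.3]{mya} for the diagonal ones. Your explicit count of the basis of $v\L v$ (the idempotent, the single socle element after the Type 1 identifications, and the $\occ(\alpha,V)\left(\occ(\alpha,V)\mu(\alpha)-1\right)$ proper prefixes per vertex $\alpha$) is accurate and simply fills in the detail the paper leaves to the cited reference.
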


\begin{proof}
It follows from Proposition \ref{018} and \cite[Proposition 3.3]{mya}.
\end{proof}
From the expression of the entries $c_{v,w}$ it is clear that the matrix $\CM_{\L}$ is symmetric,  and it must be like this because Brauer configuration algebras are symmetric.
\begin{ejem}\label{022}
Let $\G=(\G_0,\G_1,\mu,\oo)$ be a Brauer configuration such that $\mu\equiv1$ and the polygons in $\G_1=\left\{V_1,\ldots,V_n\right\}$ are sets. We can affirm that for any $V\in\G_1$ \[\occ(\alpha,V)\le1,\textrm{ for all }\alpha\in\G_0.\] Let $\L$ be the Brauer configuration algebra induced by $\G$ and let $\CM_{\L}$ be its Cartan matrix. Then by Proposition \ref{046} we have that
\[\CM_{\L}=\left(\begin{array}{cccc}2 & |V_1\cap V_2| & \cdots & |V_1\cap V_n|\\|V_1\cap V_2| & 2 &  & |V_2\cap V_n|\\ \vdots & \vdots & \ddots & \vdots\\|V_1\cap V_n| & |V_2\cap V_n| & \cdots & 2\end{array}\right).\]
\end{ejem}

\begin{ejem}\label{023}
Let $\G$ be the Brauer configuration from Example \ref{002}. It is not difficult to see that 
\begin{multicols}{3}
\begin{enumerate}[]
\item $\overline{V}_1\cap\overline{V}_2=\{1,2\}$,
\item $\overline{V}_1\cap\overline{V}_3=\{1\}$,
\item $\overline{V}_1\cap\overline{V}_4=\emptyset$,
\item $\overline{V}_2\cap\overline{V}_3=\{1\}$,
\item $\overline{V}_2\cap\overline{V}_4=\emptyset$,
\item $\overline{V}_3\cap\overline{V}_4=\{3\}$.
\end{enumerate}
\end{multicols}
Let $\L$ be the Brauer configuration algebra associated to $\G$ and let $\CM_{\L}=\left(c_{i,j}\right)_{1\le i,j\le 4}$ be the respective Cartan matrix. Thus, the entries of the main diagonal are equal to
\[
\begin{array}{rclcr}
c_{1,1} & = & 2+1\cdot(1\cdot2-1)+1\cdot(1\cdot2-1) & = &4,\\
 c_{2,2} & = & 2+1\cdot(1\cdot2-1)+1\cdot(1\cdot2-1) & = &4,\\
 c_{3,3} & = & 2+2\cdot(2\cdot2-1)+2\cdot(1\cdot2-1) & = & 10,\\
  c_{4,4} & = & 2, & & 
\end{array}
\]
and the other entries are given by
\[
\begin{array}{rclcl}
c_{1,2} & = & 2\cdot1\cdot1+2\cdot1\cdot1 & = & 4,\\
c_{1,3} & = & 2\cdot1\cdot2 & = & 4,\\
c_{1,4} & = & 0, & &\\
c_{2,3} & = & 2\cdot1\cdot2 & = & 4,\\
c_{2,4} & = & 0, & &\\
c_{3,4} & = & 1\cdot2\cdot1 & = & 2,
\end{array}
\]
then the Cartan matrix is equal to
\[\CM_{\L}=\left(\begin{array}{cccc}4 & 4 & 4 & 0\\4 & 4 & 4 & 0\\4 & 4 & 10 & 2\\0 & 0 & 2 & 2\end{array}\right).\]
As expected, the sum of all the entries of $\CM_{\L}$ must be equal to the vector dimension of $\L$, which is given\footnote{By \cite[Proposition 3.13]{brau} the expression to compute the vector dimension of $\L$ is  $2|\G_1|+\sum\limits_{\alpha\in\G_0}\val(\alpha)(\mu(\alpha)\val(\alpha)-1)$.} by

{\small
\begin{eqnarray*}
\textrm{dim}_K\L & = & 2\cdot4+4\cdot(2\cdot4-1)+2\cdot(2\cdot2-1)+3\cdot(1\cdot3-1),\\
 & = & 8+28+6+6,\\
 & = & 48.
\end{eqnarray*}
}

\end{ejem}

\section{The length of an indecomposable projective module associated to a Brauer configuration algebra}\label{sec6}

In this section, the formula of the length of the $\L$-module resulting from the sum of a class of uniserial modules contained in an indecomposable projective $\L$-module, where $\L$ is a Brauer configuration algebra, is determined (Theorem \ref{032}). Then, this formula is used to determine the length of any indecomposable projective $\L$-module (Corollary \ref{044}), and other formulas too (Propositions \ref{050} and \ref{051}).\\ 

Given a (right) $\L$-module $M$, with $\L$ a finite-dimensional $K$-algebra, the length of $M$ is defined as the number of non-zero submodules of $M$ in a composition series, in case that such a composition series exists. This numerical value for $M$ is usually denoted by $\ell(M)$. We start by introducing the nonprojective uniserial modules associated to a Brauer configuration algebra as also the notation introduced by the authors in \cite[Section 3.4]{brau}.

\vspace{0.4cm}

Let $\L=K\Q/I$ be a Brauer configuration algebra associated to a Brauer configuration $\G$, and let $V\in\G_1$ be a polygon with $v$ its associated vertex in $\Q$. If $P_V$ is the indecomposable projective $\L$-module associated to $v$ and $S_V$ the simple $\L$-module, we know that these can be identified respectively by $P_V=v\L$ and $S_V=v\L/\rad\,v\L$. For a nontruncated vertex $\alpha\in\G_0$ assume that $\alpha\in\G_0\cap\overline{V}$ and let $C\in\C_{(\alpha)}^{\,v}$ be a special $\alpha$-cycle with $C=a^{(\alpha)}_{j_{1}}a^{(\alpha)}_{j_{2}}\cdots a^{(\alpha)}_{j_{\val(\alpha)}}$ and $\val(\alpha)>1$. Let's represent the arrows forming the special cycle $C$ by the following figure.

\begin{figure}[H]
\centering
 \begin{pspicture}(-3.2,-3.2)(3.2,3.2)
 \psarc[linewidth=1.1pt]{->}(0,0){2}{-125}{-85}
 \psarc[linewidth=1.1pt]{->}(0,0){2}{-85}{-45}
  \psarc[linewidth=1.1pt]{->}(0,0){2}{-45}{-5}
  \psarc[linewidth=0.7pt,linestyle=dashed]{->}(0,0){2}{-5}{75}
  \psarc[linewidth=1.1pt]{->}(0,0){2}{75}{115}
  \psarc[linewidth=1.1pt]{->}(0,0){2}{115}{155}
  \psarc[linewidth=0.7pt,linestyle=dashed]{->}(0,0){2}{155}{235}
    \rput[t](0.183,-2.092){$C$}
    \rput[t](0.930,-1.994){\footnotesize{$a_{j_{1}}^{(\alpha)}$}}
    \rput[t](1.994,-0.930){\footnotesize{$a_{j_{2}}^{(\alpha)}$}}
    \rput[b](-0.192,2.192){\footnotesize{$a_{j_{k}}^{(\alpha)}$}}
    \rput[b](-1.556,1.556){\footnotesize{$a_{j_{k+1}}^{(\alpha)}$}}
    \rput[t](-0.569,-2.125){\footnotesize{$a_{j_{\val(\alpha)}}^{(\alpha)}$}}
    \rput[b](0.165,-1.893){$v_{j_1}$}
    \rput[br](1.343,-1.343){$v_{j_2}$}
    \rput[r](1.893,-0.165){$v_{j_3}$}
    \rput[t](0.492,1.835){$v_{j_k}$}
    \rput[tl](-0.803,1.722){$v_{j_{k+1}}$}
    \rput[l](-1.722,0.803){$v_{j_{k+2}}$}
    \rput[l](-1.117,-1.537){$v_{j_{\val(\alpha)}}$}
 \end{pspicture}
 \caption{Graphic representation of the special $\alpha$-cycle $C$.}
 \end{figure}
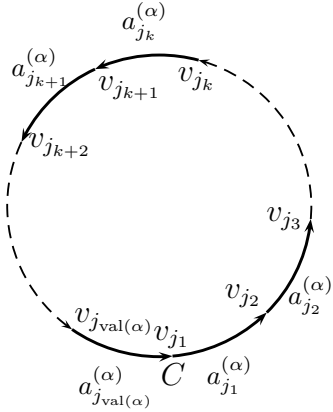
 
 As we can see from this graphic representation of the special $\alpha$-cycle $C$, the arrow $a^{(\alpha)}_{j_t}$ starts at $v_{j_t}$ and ends at $v_{j_{t+1}}$ for all $1\le t\le\val(\alpha)$, as also $v_{j_{\val(\alpha)}+1}=v_{j_1}=v$. Now, for $1\le j\le\mu(\alpha)\val(\alpha)$ such that $j=\val(\alpha)k+l$, with $0\le k<\mu(\alpha)$ and $0\le l<\val(\alpha)$, let $U_j=U_j(C)$ be the submodule of $v\L$ given by:
 \begin{itemize}
     \item if $l\neq0$ then $U_j$ has $K$-basis \[\left\{\overline{C^ka^{(\alpha)}_{j_1}\cdots a^{(\alpha)}_{j_l}},\overline{C^ka^{(\alpha)}_{j_1}\cdots a^{(\alpha)}_{j_{l+1}}},\ldots,\overline{C^{k+1}},\overline{C^{k+1}a^{(\alpha)}_{j_1}},\ldots,\overline{C^{\mu(\alpha)}}\right\};\]
     \item if $l=0$ then $U_j$ has $K$-basis \[\left\{\overline{C^k},\overline{C^ka^{(\alpha)}_{j_1}},\overline{C^ka^{(\alpha)}_{j_1}a^{(\alpha)}_{j_2}},\ldots,\overline{C^{\mu(\alpha)}}\right\}.\]
 \end{itemize}
 
 According\footnote{By \cite[Lemma 3.7]{brau} these are the only non-projective uniserial $\L$-modules.} to \cite[Section 3.4]{brau} $U_j$ is an uniserial $\L$-module containing $U_{j+1}$ and such that $U_{j}/U_{j+1}$ is isomorphic to the simple module associated to $v_{j_{l+1}}$, for each $1\le j\le\mu(\alpha)\val(\alpha)$. Thus, we have the following composition series of the uniserial $\L$-module $U_1$ \begin{equation}\label{025}0\subset U_{\mu(\alpha)\val(\alpha)}\subset U_{\mu(\alpha)\val(\alpha)-1}\subset\cdots\subset U_2\subset U_1.\end{equation}
\begin{obs}\label{028}
If the special $\alpha$-cycle $C$ is a loop then necessarily $\val(\alpha)=1$ and $\mu(\alpha)>1$, hence the $K$-basis of the module $U_j$ is given by \[\left\{\overline{C}^{j},\overline{C}^{j+1},\ldots,\overline{C}^{\mu(\alpha)}\right\},\] for each $1\le j\le\mu(\alpha)$.
\end{obs}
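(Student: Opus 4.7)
The plan is to verify both assertions directly from the definitions laid out earlier in the section. For the first claim, I would exploit the graphic representation of a special $\alpha$-cycle: it is a closed path in $\Q$ whose length equals $\val(\alpha)$, consisting of one arrow for each polygon in the successor sequence at $\alpha$. If this cycle happens to be a loop at some vertex $v$, then it has length one, hence $\val(\alpha)=1$. Combining this with the fact that $\alpha$ must be nontruncated (otherwise no special $\alpha$-cycle exists at $\alpha$), i.e.\ $\val(\alpha)\mu(\alpha)>1$, forces $\mu(\alpha)>1$.

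For the basis claim, I would specialize the general definition of $U_j$ to the case $\val(\alpha)=1$. Writing $j=\val(\alpha)k+l=k+l$ with $0\le l<\val(\alpha)=1$ forces $l=0$, so only the second bullet in the definition of $U_j$ applies, giving the basis $\{\overline{C^k},\overline{C^ka^{(\alpha)}_{j_1}},\overline{C^ka^{(\alpha)}_{j_1}a^{(\alpha)}_{j_2}},\ldots,\overline{C^{\mu(\alpha)}}\}$. However, because $\val(\alpha)=1$, the cycle $C$ is precisely the single arrow $a^{(\alpha)}_{j_1}$, so $C=a^{(\alpha)}_{j_1}$ and the additional continuation factors $a^{(\alpha)}_{j_2},a^{(\alpha)}_{j_3},\ldots$ must all be reinterpreted as further traversals of this same loop. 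Consequently every element $\overline{C^k a^{(\alpha)}_{j_1}\cdots a^{(\alpha)}_{j_r}}$ coincides with $\overline{C^{k+r}}$, and since $j=k$ in this case the basis collapses to $\{\overline{C}^{j},\overline{C}^{j+1},\ldots,\overline{C}^{\mu(\alpha)}\}$ as claimed.

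The main subtlety I anticipate is reconciling the index ranges. In the general definition $k$ ran over $0\le k<\mu(\alpha)$, whereas the statement indexes $U_j$ over $1\le j\le\mu(\alpha)$, so I would need to justify that the extremal value $j=\mu(\alpha)$ corresponds to the one-element basis $\{\overline{C}^{\mu(\alpha)}\}$. This is consistent with the relations of type two, which force $\overline{C}^{\mu(\alpha)+1}=0$ in $\L$ (so the descending chain terminates at $\overline{C}^{\mu(\alpha)}$), together with the fact that $\overline{C}^{\mu(\alpha)}$ itself is nonzero in $\L$; hence the $\mu(\alpha)$ submodules $U_1\supset U_2\supset\cdots\supset U_{\mu(\alpha)}$ form a strict chain of uniserial submodules of $v\L$ whose successive quotients are all isomorphic to $S_V$, recovering a shorter and more transparent composition series than (\ref{025}) in this loop case.
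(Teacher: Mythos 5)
Your argument is correct and is precisely the direct specialization that the paper leaves implicit: the Observation is stated without proof, and your verification (a loop forces $\val(\alpha)=1$, nontruncatedness then forces $\mu(\alpha)>1$, and the $l=0$ case of the basis collapses to $\{\overline{C}^{j},\ldots,\overline{C}^{\mu(\alpha)}\}$ once one notes $C=a^{(\alpha)}_{j_1}$) is exactly the intended reasoning. Your care with the endpoint $j=\mu(\alpha)$, which the stated parametrization $0\le k<\mu(\alpha)$ does not literally cover and which must be justified via the type two relation $\overline{C^{\mu(\alpha)}f(C)}=0$, is a welcome extra precision rather than a deviation.
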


 The following proposition has a straightforward proof. 
 \begin{propo}\label{027}
 Let $C$ be an special $\alpha$-cycle. Then \[\textrm{dim}_KU_j(C)=\mu(\alpha)\val(\alpha)-j+1,\] for all $1\le j\le\mu(\alpha)\val(\alpha)$.
 \end{propo}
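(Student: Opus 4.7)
The approach is to compute $\dim_K U_j(C)$ directly from the explicit $K$-basis of $U_j(C)$ specified in the definition just above the proposition. The unifying observation is that, in every case of the definition, the listed basis elements are paths in $\Q$ whose lengths (as words in the arrows of $\Q$) form an arithmetic progression of common difference one over a specific interval of integers; hence the dimension will just equal the length of that interval.

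Concretely, in the generic case $\val(\alpha) > 1$, I would write $j = \val(\alpha) k + l$ as in the definition. In the sub-case $l \neq 0$, every listed basis element has the form $\overline{C^m a^{(\alpha)}_{j_1} \cdots a^{(\alpha)}_{j_s}}$, and its length as a path in $\Q$ equals $\val(\alpha) m + s$. The first basis element $\overline{C^k a^{(\alpha)}_{j_1} \cdots a^{(\alpha)}_{j_l}}$ has length $j$, and the last, $\overline{C^{\mu(\alpha)}}$, has length $\mu(\alpha)\val(\alpha)$; moreover, each consecutive pair in the list differs by the appending of exactly one arrow. Hence the multiset of path-lengths of the basis is precisely $\{j, j+1, \ldots, \mu(\alpha)\val(\alpha)\}$, whose cardinality is $\mu(\alpha)\val(\alpha) - j + 1$.

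The sub-case $l = 0$ is treated identically: the first basis element is $\overline{C^k}$ of length $\val(\alpha) k = j$, and the last is $\overline{C^{\mu(\alpha)}}$ of length $\mu(\alpha)\val(\alpha)$, giving the same count. The loop case of Observation \ref{028} (where $\val(\alpha) = 1$ and $\mu(\alpha) > 1$) is even more immediate: the stated basis $\{\overline{C}^{j}, \overline{C}^{j+1}, \ldots, \overline{C}^{\mu(\alpha)}\}$ has cardinality $\mu(\alpha) - j + 1$, which equals $\mu(\alpha)\val(\alpha) - j + 1$ since $\val(\alpha) = 1$.

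I do not anticipate a real obstacle here. The only slightly delicate point is that the listed elements must genuinely be $K$-linearly independent in $\L = K\Q/I$, and not collapsed by the relations in $I$; but this is exactly the content already summarized from \cite[Section 3.4]{brau}, where the strict descending chain $U_1 \supsetneq U_2 \supsetneq \cdots \supsetneq U_{\mu(\alpha)\val(\alpha)}$ with simple successive quotients (\ref{025}) is established. Taking that linear independence for granted, the claim reduces to the short combinatorial count sketched above.
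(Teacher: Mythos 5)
Your proof is correct and matches the paper's intent: the paper simply declares the proposition "straightforward" and offers no argument, the implicit one being exactly your count of the explicitly listed $K$-basis of $U_j(C)$. Your observation that the path lengths of the listed basis elements run through $j, j+1, \ldots, \mu(\alpha)\val(\alpha)$ in steps of one, together with deferring linear independence to \cite[Section 3.4]{brau} as the paper itself does, is precisely the intended verification.
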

 
 Only in this section, we use the same symbols of the multiplicity function and the valence to define the following functions. This is obviously an abuse of notation but it will seem natural in the computations that we are going to develop. So, let $\mu,\val:\CC\to\mathbb{Z}_{>0}$ be functions defined respectively by:
 
 For each $\alpha\in\G_0\setminus\T_{\G}$
 \begin{eqnarray*}
 \mu(C) & = & \mu(\alpha),\\ \val(C) & = & \val(\alpha),
 \end{eqnarray*}
 for all $C\in\CC_{(\alpha)}$ (see (\ref{005}) and (\ref{008}) to remind definitions).

As mentioned in \cite{brau2} an alternative way of studying Brauer configuration algebras is by using the concept of a \textit{defining pair} \cite[Section 3]{brau2}. According to the notation that we are implementing the pair given by $(\CC,\mu)$ is a defining pair of $\Q$. Thus we may say that the Brauer configuration algebra $\L=K\Q/I$ induced by $\G$ coincides with the algebra defined by $(\CC,\mu)$. The Proposition \ref{027} can be restated as follows.

\begin{propo}\label{036}
Let $C$ be a special cycle. Then \[\dime_K{U_j(C)}=\mu(C)\val(C)-j+1,\] for all $1\le j\le \mu(C)\val(C)$.
\end{propo}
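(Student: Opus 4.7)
The plan is to observe that Proposition \ref{036} is a purely notational restatement of Proposition \ref{027}, obtained by pulling back the assignments $\mu(C):=\mu(\alpha)$ and $\val(C):=\val(\alpha)$ that were just introduced as functions $\CC\to\mathbb{Z}_{>0}$. The only thing to verify is well-definedness: each special cycle $C$ belongs to $\CC_{(\alpha)}$ for a unique nontruncated vertex $\alpha$, because the arrows forming $C$ all carry the superscript $(\alpha)$ by construction in (\ref{004}), and this superscript is precisely the piece of data recorded by the cycle. Consequently $\mu(C)$ and $\val(C)$ are unambiguously defined, and substituting them into Proposition \ref{027} recovers the formula of Proposition \ref{036} on the nose.

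For completeness I would also sketch how I would prove Proposition \ref{027} itself, since the excerpt only asserts its proof is ``straightforward''. The strategy is a direct count of the $K$-basis of $U_j(C)$ displayed in Section 3.4 of \cite{brau}. Writing $j=\val(\alpha)k+l$ with $0\le k<\mu(\alpha)$ and $0\le l<\val(\alpha)$, the two displayed bases (according to whether $l\neq 0$ or $l=0$) are both increasing chains of residue classes of paths whose total arrow-length runs from $j$ up to $\mu(\alpha)\val(\alpha)$ in unit steps. In each case this yields exactly $\mu(\alpha)\val(\alpha)-j+1$ vectors, matching the claim.

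The genuinely nontrivial ingredient is verifying that the displayed classes are $K$-linearly independent in $\L$, i.e. that no two of the underlying paths are identified by the relations $\rho_{\G}$ generating $I$. Type-one relations only equate $C^{\mu(\alpha)}$ with $D^{\mu(\beta)}$ at a common polygon and preserve total length; the type-two and type-three relations are zero relations. Hence none can collapse two distinct elements of the listed basis. This independence is, however, precisely the content of \cite[Lemma 3.7]{brau}, which is what makes the $U_j(C)$ uniserial in the first place, so I would invoke that lemma rather than reprove it. Modulo that cited result, the argument is a one-line deduction, and the main (modest) obstacle is simply organising the two cases $l=0$ and $l\neq 0$ so that both fit the uniform count $\mu(C)\val(C)-j+1$.
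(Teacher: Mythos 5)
Your proposal is correct and matches the paper's (implicit) argument: Proposition \ref{036} is stated there with no proof precisely because it is the notational restatement of Proposition \ref{027} via $\mu(C)=\mu(\alpha)$, $\val(C)=\val(\alpha)$, and your well-definedness check plus the unit-step length count of the displayed bases (with independence delegated to \cite[Lemma 3.7]{brau}) is exactly the ``straightforward'' verification the paper leaves to the reader.
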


For $C$ and $C'$ different special cycles at the vertex $v$ and $1\le j\le\mu(C')\val(C')$, the respective $K$-basis of $U_1(C)$ and $U_j(C')$ have a unique element in common which is given by \[\overline{C^{\mu(C)}}=\overline{C'^{\mu(C')}}.\] Now, if $\Delta$ is a nonempty collection of special cycles at $v$ such that $C'\notin\Delta$ it is straightforward to see that also in this case the $K$-basis of $\sum_{C\in\Delta}U_1(C)$ and $U_j(C')$ have a unique element in common, namely $\overline{C'^{\mu(C')}}$. Hence, we can affirm that
\begin{equation*}
\left(\sum_{C\in\Delta}U_1(C)\right)\cap U_j(C')
\end{equation*}
has $K$-basis $\left\{\overline{C'^{\mu(C')}}\right\}$ and therefore
\begin{equation}\label{031}
    \dime_K\left[\left(\sum_{C\in\Delta}U_1(C)\right)\cap U_j(C')\right]=1.
\end{equation}
\begin{propo}\label{030}
Let $V$ be a polygon in $\G$ and let $\Delta$ be a nonempty collection of special cycles at the vertex $v$. If $C'$ is a special cycle at $v$ such that $C'\notin\Delta$, then the quotient \[\left(\sum_{C\in\Delta}U_1(C)+U_{j}(C')\right)\left/\left(\sum_{C\in\Delta}U_1(C)+U_{j+1}(C')\right)\right.\] is a simple module for each $1\le j\le\mu(C')\val(C')-1$.
\end{propo}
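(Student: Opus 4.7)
The plan is to reduce the quotient to the known simple quotient $U_j(C')/U_{j+1}(C')$ appearing in the composition series \eqref{025}, by exploiting the fact that the intersection described in \eqref{031} is a $1$-dimensional subspace which actually already lies inside $U_{j+1}(C')$.

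First I would set $A=\sum_{C\in\Delta}U_1(C)$ and rewrite the quotient using the Second Isomorphism Theorem:
\[
\left(A+U_j(C')\right)\left/\left(A+U_{j+1}(C')\right)\right.\;\cong\;U_j(C')\bigl/\bigl(U_j(C')\cap(A+U_{j+1}(C'))\bigr).
\]
Since $U_{j+1}(C')\subseteq U_j(C')$, the modular law applies and yields
\[
U_j(C')\cap\bigl(A+U_{j+1}(C')\bigr)=\bigl(U_j(C')\cap A\bigr)+U_{j+1}(C'),
\]
so the quotient in question is isomorphic to $U_j(C')\big/\bigl((U_j(C')\cap A)+U_{j+1}(C')\bigr)$.

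Next I would invoke \eqref{031}, which guarantees that $U_j(C')\cap A$ is $1$-dimensional with basis $\{\overline{C'^{\mu(C')}}\}$. The crux is then to observe that this single basis vector lies in $U_{j+1}(C')$: indeed, inspecting the $K$-bases defining $U_{j+1}(C')$ (both the $l\neq 0$ and $l=0$ cases), the element $\overline{C'^{\mu(C')}}$ always appears as the last basis vector whenever $j+1\le\mu(C')\val(C')$, which is exactly the range $1\le j\le\mu(C')\val(C')-1$ under consideration. Consequently $(U_j(C')\cap A)\subseteq U_{j+1}(C')$, and therefore $(U_j(C')\cap A)+U_{j+1}(C')=U_{j+1}(C')$.

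Combining these steps, the quotient in the statement is isomorphic to $U_j(C')/U_{j+1}(C')$, which is simple by the composition series \eqref{025} (it is isomorphic to the simple module associated to the appropriate vertex $v_{j_{l+1}}$). I do not expect any real obstacle here: the only subtle point is the verification that $\overline{C'^{\mu(C')}}$ lies in $U_{j+1}(C')$ for every admissible $j$, and this is immediate from the explicit bases of the $U_{j+1}(C')$ recalled earlier. Everything else is a clean application of the isomorphism theorems together with the dimension computation \eqref{031}.
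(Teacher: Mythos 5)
Your argument is correct, and it reaches the conclusion by a genuinely different route than the paper. The paper's proof is a pure dimension count: writing $N=\sum_{C\in\Delta}U_1(C)$, it computes
\[
\dim_K\bigl(N+U_j(C')\bigr)-\dim_K\bigl(N+U_{j+1}(C')\bigr)
=1+\dim_K\bigl(N\cap U_{j+1}(C')\bigr)-\dim_K\bigl(N\cap U_{j}(C')\bigr)=1,
\]
using (\ref{031}) to see that both intersections are one-dimensional, and then concludes that a one-dimensional quotient must be simple. You instead identify the quotient up to isomorphism: the second isomorphism theorem and the modular law reduce it to $U_j(C')\big/\bigl((U_j(C')\cap N)+U_{j+1}(C')\bigr)$, and your key additional observation --- that the one-dimensional intersection $U_j(C')\cap N=K\,\overline{C'^{\mu(C')}}$ is already contained in $U_{j+1}(C')$, since $\overline{C'^{\mu(C')}}$ appears in every one of the listed bases --- collapses the denominator to $U_{j+1}(C')$, giving an isomorphism with $U_j(C')/U_{j+1}(C')$. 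Both proofs hinge on (\ref{031}); the paper's is shorter, while yours buys a little more, namely the isomorphism type of the composition factor (the simple module attached to $v_{j_{l+1}}$ from the series (\ref{025})), which makes the structure of the composition series built afterwards in Theorem \ref{032} more transparent. All the individual steps you invoke (second isomorphism theorem, Dedekind's modular law with $U_{j+1}(C')\subseteq U_j(C')$, and the membership $\overline{C'^{\mu(C')}}\in U_{j+1}(C')$ for $j+1\le\mu(C')\val(C')$) check out.
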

\begin{proof}
Let $N=\sum_{C\in\Delta}U_1(C)$, $U'_j=U_j(C')$ and $U'_{j+1}=U_{j+1}(C')$. By Proposition \ref{036} it is straightforward that  $\dim_K U'_j/U'_{j+1}=1$, then by (\ref{031}) we obtain
\begin{eqnarray*}
\dime_K(N+U'_{j})/(N+U'_{j+1}) & = & \dime_K(N+U'_{j})-\dime_K(N+U'_{j+1})\\ & = & 1+\dime_KN\cap U'_{j+1}-\dime_KN\cap U'_{j}\\ & = & 1
\end{eqnarray*}
Thus, necessarily the quotient $(N+U'_j)/(N+U'_{j+1})$ is a simple $\L$-module.
\end{proof}

Consider the particular case $j=\mu(C')\val(C')-1$ in the previous proposition. We see that 

{\small\begin{multline*}
\left(\sum_{C\in\Delta}U_1(C)+U'_{\mu(C')\val(C')-1}\right)\left/\left(\sum_{C\in\Delta}U_1(C)+U'_{\mu(C')\val(C')}\right)\right.=\\
\left.\left(\sum_{C\in\Delta}U_1(C)+U'_{\mu(C')\val(C')-1}\right)\right/\sum_{C\in\Delta}U_1(C)
\end{multline*}}
Now, if $\mu(C')\val(C')>2$ then the series of submodules 
{\small\begin{multline}\label{037}
\sum_{C\in\Delta}U_1(C)\subset\sum_{C\in\Delta}U_1(C)+U'_{\mu(C')\val(C')-1}\subset\cdots\subset\sum_{C\in\Delta}U_1(C)+U'_1\\
=\sum_{C\in\Delta\cup\left\{C'\right\}}U_1(C)
\end{multline}}
is a segment of a composition series. And if $\mu(C')\val(C')=2$ then the inclusion
{\small\begin{equation}\label{038}\sum_{C\in\Delta}U_1(C)\subset\sum_{C\in\Delta}U_1(C)+U'_1=\sum_{C\in\Delta\cup\left\{C'\right\}}U_1(C)\end{equation}} is a trivial segment of a composition series.

\begin{theo}\label{032}
Let $V$ be a polygon in $\G$ and let $\Delta$ be a nonempty collection of special cycles at the vertex $v$. Then \[\ell\left(\sum_{C\in\Delta}U_1(C)\right)=\sum_{C\in\Delta}\mu(C)\val(C)-|\Delta|+1.\]
\end{theo}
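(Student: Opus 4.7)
The plan is to prove the formula by induction on $|\Delta|$, leveraging the chain of inclusions already displayed in (\ref{037}) and (\ref{038}).

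For the base case $|\Delta|=1$, write $\Delta=\{C\}$. The composition series (\ref{025}) exhibits $U_1(C)$ as a uniserial module of length $\dime_KU_1(C)$, which by Proposition \ref{036} equals $\mu(C)\val(C)$. This matches $\mu(C)\val(C)-|\Delta|+1$, so the base case is immediate.

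For the inductive step, assume the formula for a collection $\Delta$ with $|\Delta|=n$, and pick a special cycle $C'$ at $v$ with $C'\notin\Delta$. The key observation is that the sequence of inclusions in (\ref{037}) (or the single inclusion (\ref{038}) when $\mu(C')\val(C')=2$) connects $\sum_{C\in\Delta}U_1(C)$ to $\sum_{C\in\Delta\cup\{C'\}}U_1(C)$ by exactly $\mu(C')\val(C')-1$ proper inclusions. Each of these inclusions has a simple successive quotient thanks to Proposition \ref{030}. Hence, concatenating a composition series of $\sum_{C\in\Delta}U_1(C)$ with this segment yields a composition series of $\sum_{C\in\Delta\cup\{C'\}}U_1(C)$, giving
\[
\ell\Bigl(\sum_{C\in\Delta\cup\{C'\}}U_1(C)\Bigr)=\ell\Bigl(\sum_{C\in\Delta}U_1(C)\Bigr)+\mu(C')\val(C')-1.
\]
Plugging the inductive hypothesis into the right-hand side gives
\[
\sum_{C\in\Delta}\mu(C)\val(C)-|\Delta|+1+\mu(C')\val(C')-1=\sum_{C\in\Delta\cup\{C'\}}\mu(C)\val(C)-|\Delta\cup\{C'\}|+1,
\]
which closes the induction.

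The only delicate point is ensuring the segment really has the claimed length, i.e.\ that each inclusion in (\ref{037}) is strict. But strictness follows directly from the simplicity of the successive quotients established in Proposition \ref{030}: a zero quotient cannot be simple. Note also that for a nontruncated vertex $\alpha$ one has $\mu(\alpha)\val(\alpha)\ge 2$, so the edge case $\mu(C')\val(C')=1$ never occurs and (\ref{037}) or (\ref{038}) genuinely contributes at least one simple factor. This is really the only thing that could go wrong, and it is ruled out by the definition of nontruncated vertex, so the induction goes through without obstruction.
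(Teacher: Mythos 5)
Your proof is correct and is essentially the same argument as the paper's: the paper also starts from the composition series of a single $U_1(C_1)$ and then successively adjoins the cycles of $\Delta$ one at a time via the segments (\ref{037})/(\ref{038}), counting $\mu(C_{k+1})\val(C_{k+1})-1$ new simple factors at each stage, which is exactly your induction on $|\Delta|$. Your explicit remarks on strictness of the inclusions and on the exclusion of the case $\mu(C')\val(C')=1$ are sound and merely make precise what the paper leaves implicit.
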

\begin{proof}
By Proposition \ref{036} the equality is obvious when $\Delta$ has an only element. So, let's suppose that $|\Delta|>1$ and it is given by $\Delta=\left\{C_1,\ldots,C_r\right\}$. For each $1\le k\le r$ let $\Delta_k$ be defined as \[\Delta_k=\cup_{j=1}^k\left\{C_j\right\}.\] Setting $U_j=U_j(C_1)$ we start with the composition series of $U_1$,

\begin{equation}\label{039}
0\subset U_{\mu(C_1)\val(C_1)}\subset U_{\mu(C_1)\val(C_1)-1}\subset\cdots\subset U_2\subset U_1.
\end{equation}
By Proposition \ref{030} and using both segments of composition series of the type in (\ref{037}) and of the type in (\ref{038}), we construct successively a composition series segment of the form
{\footnotesize
\begin{equation}\label{040}
U_1=\sum_{C\in\Delta_1}U_1(C)\subset\cdots\subset\sum_{C\in\Delta_2}U_1(C)\subset\cdots\subset\sum_{C\in\Delta_r}U_1(C)=\sum_{C\in\Delta}U_1(C).
\end{equation}}
For each $1\le k\le r-1$, we can see in (\ref{040}) that the number of consecutive submodules from $\sum_{C\in\Delta_k}U_1(C)$ to $\sum_{C\in\Delta_{k+1}}U_1(C)$ is equal to $\mu(C_{k+1})\val(C_{k+1})-1$, while $\ell(U_1)=\mu(C_1)\val(C_1)$ in (\ref{039}). Gluing the segments in (\ref{039}) and (\ref{040}) we finally obtain an explicit composition series of the module $\sum_{C\in\Delta}U_1(C)$, therefore its length is equal to
\begin{eqnarray*}
\ell\left(\sum_{C\in\Delta}U_1(C)\right) & = & \sum_{C\in\Delta}\left(\mu(C)\val(C)-1\right)+1,\\ & = & \sum_{C\in\Delta}\mu(C)\val(C)-|\Delta|+1.
\end{eqnarray*}
\end{proof}

\begin{cor}\label{044}
Let $\L=K\Q/I$ be the Brauer configuration algebra induced by $\G$, and let $V$ be a polygon of $\G$ with $v$ its associated vertex in $\Q$. Then \[\ell\left(v\L\right)=2+\sum_{\alpha\in\overline{V}}\occ(\alpha,V)(\val(\alpha)\mu(\alpha)-1).\]
\end{cor}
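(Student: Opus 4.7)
The plan is to identify the radical of $v\L$ as a sum of uniserial submodules of the form $U_1(C)$, apply Theorem \ref{032} to that sum, and then add $1$ for the simple top $S_V$. Concretely, I would take $\Delta$ to be the full collection of special cycles at $v$, namely
\[
\Delta:=\bigcup_{\alpha\in\overline{V}\setminus\T_\G}\C_{(\alpha)}^{\,v}.
\]
Note that Condition C3 guarantees $\overline{V}\setminus\T_\G\neq\emptyset$, so $\Delta$ is nonempty, and counting gives $|\Delta|=\sum_{\alpha\in\overline{V}\setminus\T_\G}\occ(\alpha,V)$ while $\sum_{C\in\Delta}\mu(C)\val(C)=\sum_{\alpha\in\overline{V}\setminus\T_\G}\occ(\alpha,V)\mu(\alpha)\val(\alpha)$.

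The central step is to verify the identification
\[
\rad(v\L)=\sum_{C\in\Delta}U_1(C).
\]
For the inclusion $\supseteq$, each $U_1(C)$ is by definition spanned by classes of paths of positive length starting at $v$, hence sits inside $\rad(v\L)$. For the reverse inclusion, $\rad(v\L)$ is spanned by the classes of paths of length $\ge 1$ starting at $v$, and the relations of type three force every such nonzero class to be a subpath of some power of a special cycle that begins at $v$; equivalently, after fixing its initial arrow $a$, the path lies in $U_1(C)$ for the unique special cycle $C\in\Delta$ whose first arrow is $a$. The bijection between arrows out of $v$ and the elements of $\Delta$ (each $\C^{\,v}_{(\alpha)}$ providing exactly $\occ(\alpha,V)$ special cycles with distinct initial arrows, matching the enumeration in Section \ref{sec02}) is what makes this work.

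Granted this identification, Theorem \ref{032} yields
\[
\ell(\rad(v\L))=\sum_{\alpha\in\overline{V}\setminus\T_\G}\occ(\alpha,V)\mu(\alpha)\val(\alpha)-\sum_{\alpha\in\overline{V}\setminus\T_\G}\occ(\alpha,V)+1,
\]
and because $v\L/\rad(v\L)\cong S_V$ is simple, $\ell(v\L)=\ell(\rad(v\L))+1$. Finally, for any truncated vertex $\alpha\in\overline{V}\cap\T_\G$ one has $\mu(\alpha)\val(\alpha)-1=0$, so extending the sum to all of $\overline{V}$ is harmless, producing
\[
\ell(v\L)=2+\sum_{\alpha\in\overline{V}}\occ(\alpha,V)(\val(\alpha)\mu(\alpha)-1).
\]

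The main obstacle is the radical identification: one must argue carefully using the type three relations and the structure of the successor sequences that every nonzero class in $\rad(v\L)$ really does lie in some $U_1(C)$, and that no two special cycles at $v$ share an initial arrow. Everything after that is bookkeeping with Theorem \ref{032} and the harmless inclusion of truncated vertices in the outer sum.
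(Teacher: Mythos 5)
Your proposal is correct and follows essentially the same route as the paper: identify $\rad(v\L)$ with $\sum_{C\in\Delta}U_1(C)$ for $\Delta$ the full set of special cycles at $v$, apply Theorem \ref{032}, add $1$ for the simple top, and absorb the truncated vertices using $\mu(\alpha)\val(\alpha)-1=0$. The only difference is that the paper simply cites the radical identification from the proof of Theorem 3.10 of the Green--Schroll paper rather than arguing it via the type-three relations as you sketch, and your explicit restriction of the index set to $\overline{V}\setminus\T_{\G}$ is if anything slightly more careful than the paper's.
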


\begin{proof}
It is not difficult to see that the collection of all the special cycles at $v$ is given by \[\Delta=\bigcup_{\alpha\in\overline{V}}\C_{(\alpha)}^{\,v}.\] On the other hand, in the proof of \cite[Theorem 3.10]{brau} is mentioned that \[\rad(v\L)=\sum_{C\in\Delta}U_1(C),\]

then by the Theorem \ref{032}
\begin{equation}\label{043}
\ell(\rad(v\L))=\sum_{C\in\Delta}\mu(C)\val(C)-|\Delta|+1.
\end{equation}
From definitions of Section \ref{sec02} any two different elements in the collection $\{\C_{(\alpha)}^{\,v}\,|\,\alpha\in\overline{V}\}$ are disjoint and $|\C_{(\alpha)}^{\,v}|=\occ(\alpha,V)$, for all $\alpha\in\overline{V}$, then
\begin{equation}\label{041}
|\Delta|=\sum_{\alpha\in\overline{V}}\occ(\alpha,V).
\end{equation}
Now, according to the definition of the functions $\mu,\val:\CC\to\mathbb{Z}_{>0}$ we have, in this case, that
\[\begin{array}{cl}
\begin{array}{rcc}\mu(C)&=&\mu(\alpha)\\\val(C)&=&\val(\alpha)\end{array}; & \forall\alpha\in\overline{V},\,\forall C\in\C_{(\alpha)}^{\,v}
\end{array}\]
It is clear that these functions are constant over $\C_{(\alpha)}^{\,v}$, for all $\alpha\in\overline{V}$. So, having all this in mind we obtain
\begin{eqnarray}
\sum_{C\in\Delta}\mu(C)\val(C) & = & \sum_{\alpha\in\overline{V}}\left(\sum_{C\in\C_{(\alpha)}^{\,v}}\mu(C)\val(C)\right)\nonumber\\  & = & \sum_{\alpha\in\overline{V}}\left(\sum_{C\in\C_{(\alpha)}^{\,v}}1\right)\mu(\alpha)\val(\alpha) \nonumber\\ & = & \sum_{\alpha\in\overline{V}}\occ(\alpha,V)\mu(\alpha)\val(\alpha)\label{042}
\end{eqnarray}
As we already know $S_V=v\L/\rad(v\L)$, then  $\ell(v\L)=\ell(\rad(v\L))+1$, thus by (\ref{043}), (\ref{041}) and (\ref{042}) we finally obtain that
\begin{eqnarray*}
\ell(v\L) & = & \sum_{\alpha\in\overline{V}}\occ(\alpha,V)\mu(\alpha)\val(\alpha)-\sum_{\alpha\in\overline{V}}\occ(\alpha,V)+2,\\ & = & 2+\sum_{\alpha\in\overline{V}}\occ(\alpha,V)(\mu(\alpha)\val(\alpha)-1).
\end{eqnarray*}
\end{proof}

\begin{cor}\label{047}
Let $\L=K\Q/I$ be a Brauer configuration algebra associated to the Brauer configuration algebra $\G$, and let $V$ be a polygon of $\G$. If $v$ is the vertex in $\Q$ associated to $V$ then \[\dime_Kv\L=2+\sum_{\alpha\in\overline{V}}\occ(\alpha,V)(\val(\alpha)\mu(\alpha)-1).\]
\end{cor}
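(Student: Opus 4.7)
The plan is to compute $\dime_K v\L$ as a row-sum of the Cartan matrix, bypassing any further module-theoretic work. The decomposition $1=\sum_{W\in\G_1}w$ of the identity into the vertex idempotents gives
\[v\L=\bigoplus_{W\in\G_1}v\L w\]
as $K$-vector spaces, so $\dime_K v\L=\sum_{W\in\G_1}c_{v,w}$, where $c_{v,w}$ are the Cartan matrix entries already determined in Proposition \ref{046}. Separating the diagonal and off-diagonal contributions, this reads
\[\dime_K v\L=2+\sum_{\alpha\in\overline V}\occ(\alpha,V)\bigl(\occ(\alpha,V)\mu(\alpha)-1\bigr)+\sum_{W\neq V}\,\sum_{\alpha\in\overline V\cap\overline W}\mu(\alpha)\occ(\alpha,V)\occ(\alpha,W).\]

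Next I would swap the order of summation in the double sum, grouping by $\alpha\in\overline V$ so that the inner sum runs over those $W\neq V$ with $\alpha\in\overline W$. The key identity is the definition $\val(\alpha)=\sum_{W\in\G_1}\occ(\alpha,W)$, which rewrites that inner sum as $\val(\alpha)-\occ(\alpha,V)$. Substituting and combining the two $\alpha$-indexed sums produces, inside each bracket,
\[\occ(\alpha,V)\mu(\alpha)-1+\mu(\alpha)\bigl(\val(\alpha)-\occ(\alpha,V)\bigr)=\mu(\alpha)\val(\alpha)-1,\]
yielding the stated formula.

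The argument is essentially bookkeeping; the only mildly subtle point is keeping track of the $-\mu(\alpha)\occ(\alpha,V)^2$ that the off-diagonal piece contributes so that it cancels the $\occ(\alpha,V)^2\mu(\alpha)$ from the diagonal, leaving a clean $\mu(\alpha)\val(\alpha)$. As an even quicker alternative, since $K$ is algebraically closed and $\L$ is basic every simple $\L$-module has $K$-dimension one, hence $\dime_K v\L=\ell(v\L)$ and Corollary \ref{044} gives the formula immediately; I would record this one-line route as well, since both corollaries are placed consecutively in the paper and the reader may prefer the length-based derivation.
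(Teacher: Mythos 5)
Your argument is correct, and your primary route differs from the paper's. The paper's proof of Corollary \ref{047} is a one-liner: it invokes $\ell(v\L)=\dime_Kv\L$ (all simples of $K\Q/I$ are one-dimensional) and reads the formula off Corollary \ref{044} --- exactly the ``quicker alternative'' you record at the end. Your main route instead sums the $v$-th row of the Cartan matrix, i.e.\ uses $v\L=\bigoplus_{W\in\G_1}v\L w$, the entries from Proposition \ref{046} (equivalently Proposition \ref{018} together with \cite[Proposition 3.3]{mya}), and the defining identity $\val(\alpha)=\sum_{W\in\G_1}\occ(\alpha,W)$; the cancellation of the $\mu(\alpha)\occ(\alpha,V)^2$ terms that you flag is exactly right and yields the stated formula. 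It is worth noting that this computation is precisely the content of the paper's Proposition \ref{051} run in reverse: the paper proves Corollary \ref{047} independently of the Cartan matrix and then combines it with Proposition \ref{046} to extract the combinatorial identity, whereas on your route that identity falls out as an immediate byproduct and Corollary \ref{047} becomes a consequence of Section \ref{sec3} alone, with no appeal to lengths, composition series, or Theorem \ref{032}. The trade-off is that the paper's length-based derivation also delivers $\ell(v\L)$ itself (Corollary \ref{044}), which your row-sum computation does not recover, and it avoids the algebraic-closure hypothesis under which the Cartan matrix section is stated (though, as the dimension formulas of Section \ref{sec3} hold over any field, that hypothesis is not genuinely needed for your argument either).
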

\begin{proof}
It follows immediately because $\ell(v\L)=\dim_Kv\L$.
\end{proof} 

One thing that we can observe of the formula in Corollary \ref{047} is its resemblance with the formula in \cite[Proposition 3.3]{mya}.\\

From Corollary \ref{047} we obtain an alternative expression for the vector dimension of a Brauer configuration algebra.

\begin{propo}\label{050}
Let $\L=K\Q/I$ be a Brauer configuration algebra associated to the Brauer configuration $\G=(\G_0,\G_1,\mu,\oo)$. Then \[\dime_K\L=2|\G_1|+\sum_{V\in\G_1}\left(\sum_{\alpha\in\overline{V}}\occ(\alpha,V)(\mu(\alpha)\val(\alpha)-1)\right).\]
\end{propo}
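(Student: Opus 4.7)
The plan is essentially a one-line reduction to Corollary \ref{047}. Since the set of vertices of $\Q$ is in bijection with $\G_1$, and these vertices (viewed as idempotents in $\L$) form a complete set of primitive orthogonal idempotents, we have the direct sum decomposition of right $\L$-modules
\[
\L \;=\; \bigoplus_{V\in\G_1} v\L,
\]
where $v$ denotes the vertex of $\Q$ associated to the polygon $V$. Taking $K$-dimensions gives $\dime_K\L = \sum_{V\in\G_1}\dime_K v\L$.

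Now I would simply substitute the formula from Corollary \ref{047}, namely
\[
\dime_K v\L \;=\; 2 + \sum_{\alpha\in\overline{V}}\occ(\alpha,V)\bigl(\val(\alpha)\mu(\alpha)-1\bigr),
\]
into the sum and split off the constant term. The constant contribution is $\sum_{V\in\G_1}2 = 2|\G_1|$, and the remaining double sum is exactly the stated right-hand side. No further manipulation is required.

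The only thing worth flagging is that this needs the identification of $\{v : V\in\G_1\}$ with a complete set of primitive orthogonal idempotents of $\L$, which is standard for any bound quiver algebra $K\Q/I$ with $I$ admissible (here $\Q$ has one vertex for each polygon by the construction in Section \ref{sec02}). There is no real obstacle: the result is a direct corollary of \ref{047} combined with the idempotent decomposition of $\L$.
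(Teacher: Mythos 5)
Your argument is correct and is exactly the one the paper uses: the proof of Proposition \ref{050} invokes the decomposition $\L\cong\coprod_{V\in\G_1}v\L$ as a right module and then sums the dimensions from Corollary \ref{047}. Your added remark about the vertices forming a complete set of primitive orthogonal idempotents simply makes explicit what the paper leaves implicit.
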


\begin{proof}
It follows from the decomposition $\L\cong\coprod_{v\in\Q_0}v\L=\coprod_{V\in\G_1}v\L$ as a right module.
\end{proof}

To finish this section, we obtain a combinatorial relation that it is satisfied by the objects that form a Brauer configuration by using the Cartan matrix of the previous section and the formula of Corollary \ref{047}.
\begin{propo}\label{051}
Let $\G=(\G_0,\G_1,\mu,\oo)$ be a Brauer configuration. Then \[\sum_{\alpha\in\overline{V}}\occ(\alpha,V)\val(\alpha)\mu(\alpha)=\sum_{W\in\G_1}\left(\sum_{\alpha\in\overline{V}\cap\overline{W}}\mu(\alpha)\occ(\alpha,V)\occ(\alpha,W)\right)\] for each polygon $V$ in $\G$.
\end{propo}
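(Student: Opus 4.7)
The plan is to evaluate $\dim_K v\L$ in two independent ways and identify the resulting expressions. Since $\{w : W\in\G_1\}$ is a complete set of primitive orthogonal idempotents of $\L$ summing to the identity, one has the right-module decomposition $v\L = \bigoplus_{W\in\G_1} v\L w$, which gives
\[
\dim_K v\L \;=\; \dim_K v\L v \;+\; \sum_{\substack{W\in\G_1\\W\neq V}} \dim_K v\L w.
\]

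Next, I would substitute the three dimension formulas already available. Corollary \ref{047} evaluates the left-hand side of the displayed equation as $2 + \sum_{\alpha\in\overline V}\occ(\alpha,V)(\val(\alpha)\mu(\alpha)-1)$. The diagonal entry $c_{v,v}$ from Proposition \ref{046} gives $\dim_K v\L v = 2 + \sum_{\alpha\in\overline V}\occ(\alpha,V)(\occ(\alpha,V)\mu(\alpha)-1)$, and Proposition \ref{018} gives $\dim_K v\L w = \sum_{\alpha\in\overline V\cap\overline W}\mu(\alpha)\occ(\alpha,V)\occ(\alpha,W)$ for each $W\neq V$.

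Plugging these three expressions into the displayed equation, both sides acquire a $+2$ and a $-\sum_{\alpha\in\overline V}\occ(\alpha,V)$, which cancel. What remains is
\[
\sum_{\alpha\in\overline V}\occ(\alpha,V)\val(\alpha)\mu(\alpha) \;=\; \sum_{\alpha\in\overline V}\mu(\alpha)\occ(\alpha,V)^2 \;+\; \sum_{\substack{W\in\G_1\\W\neq V}}\sum_{\alpha\in\overline V\cap\overline W}\mu(\alpha)\occ(\alpha,V)\occ(\alpha,W).
\]
The first sum on the right is exactly the $W=V$ contribution to the double sum $\sum_{W\in\G_1}\sum_{\alpha\in\overline V\cap\overline W}\mu(\alpha)\occ(\alpha,V)\occ(\alpha,W)$, since $\overline V\cap\overline V=\overline V$ and $\occ(\alpha,V)\cdot\occ(\alpha,V)=\occ(\alpha,V)^2$. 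Reabsorbing it yields the claimed identity.

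There is no serious obstacle here; the only bookkeeping point is that Proposition \ref{018} is stated only for $V\neq W$, so the $W=V$ case must be introduced separately via the diagonal Cartan formula of Proposition \ref{046}. The constants $+2$ and the linear ``$-1$'' correction terms arising from Corollary \ref{047} and from the diagonal Cartan entry match exactly, which is what makes the cancellation come out clean and leaves precisely the combinatorial identity being asserted.
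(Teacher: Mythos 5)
Your proposal is correct and is essentially the paper's own argument: both compute $\dim_K v\L$ once via the length/dimension formula of Corollary \ref{047} and once as $\sum_{W\in\G_1}\dim_K v\L w$ using the Cartan entries of Proposition \ref{046} (equivalently Proposition \ref{018} plus the diagonal formula), then cancel the common $+2$ and $-\sum_{\alpha\in\overline{V}}\occ(\alpha,V)$ terms and fold the diagonal contribution into the double sum. The only cosmetic difference is that the paper phrases the second computation as a row sum of the symmetric Cartan matrix, whereas you use the Peirce decomposition $v\L=\bigoplus_{W}v\L w$ directly.
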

\begin{proof}
Let $\L=K\Q/I$ be the Brauer configuration algebra associated to $\G$ and let $\CM_{\L}$ be its Cartan matrix, and where $K$ is an algebraically closed field. As we know $\CM_{\L}$ is a symmetric matrix, then for $V\in\G_1$ the sum of all entries of the $v$-th row of $\CM_{\L}$ coincides with the $K$-dimension of $v\L$, thus, by Proposition \ref{046}, we have

\begin{eqnarray}
\dime_Kv\L & = & \sum_{W\in\G_1}\dime_Kv\L w\nonumber\\ & = & 2+\sum_{\alpha\in\overline{V}}\occ(\alpha,V)(\occ(\alpha,V)\mu(\alpha)-1)\nonumber\\ & & +\sum_{W\in\G_1\setminus\{V\}}\left(\sum_{\alpha\in\overline{V}\cap\overline{W}}\mu(\alpha)\occ(\alpha,V)\occ(\alpha,W)\right)\label{048}
\end{eqnarray}
Now, it is easy to see that the expression in (\ref{048}) es equal to \[2+\sum_{W\in\G_1}\left(\sum_{\alpha\in\overline{V}\cap\overline{W}}\mu(\alpha)\occ(\alpha,V)\occ(\alpha,W)\right)-\sum_{\alpha\in\overline{V}}\occ(\alpha,V)\] then by Corollary \ref{047} we obtain the result. 
\end{proof}

\section{Brauer configurations induced by a finite group}\label{sec_indu}
In this section we show that given a non-trivial finite group of order different from a prime number, it always induces a class of natural Brauer configurations. As we have seen, two important combinatorial data in a Brauer configuration are the set of vertices and the set of polygons. The other two combinatorial data, the multiplicity function and the orientation of the polygons, can drastically change a Brauer configuration in the sense that either a different multiplicity function or a different orientation defines a new Brauer configuration. However, we saw from these two combinatorial data that the orientation does not influence either the calculus of the Cartan matrix or the length of the indecomposable projective modules of the associated Brauer configuration algebra. In \cite[Proposition 3.13]{brau} we can also see that the vector dimension of a Brauer configuration algebra remains equal if we choose another orientation over the set of polygons, as well in the formula obtained in Proposition \ref{050} that expresses the vector dimension too. In summary, both the vector dimension and the Cartan matrix of a Brauer configuration algebra are invariant with respect to the orientation of the associated Brauer configuration. This particularity will permit us to discover a couple of formulas that are satisfied for all finite groups (see Theorems \ref{057} and \ref{062}). We will get these formulas by defining first the concept of \textit{subgroup-occurrence of an element in a group}, then considering, when it is possible, the Brauer configuration induced by a finite group, and finally applying the formulas expressed in Propositions \ref{050} and \ref{051}. \\

Throughout this section, we use the usual expression $H\leqslant G$ to denote that $H$ is a subgroup of the group $G$, and by $\langle x\rangle$ the subgroup of $G$ generated by the element $x\in G$. If $H\leqslant G$, we also use the symbol $|H|$ to denote the order of $H$ in $G$. For an element $x\in G$ the symbol $|x|$ expresses the order of $x$ in $G$, unless otherwise specified.\\

\begin{defin}
Let $G$ be a group. For $x\in G$, the \textit{subgroup-occurrence of} $x$ \textit{in} $G$ is defined as the value \[\occ_G(x):=|\left\{H\leqslant G\,|\,x\in H\right\}|,\] that is, $\occ_G(x)$ is the cardinal number of the set of all subgroups in $G$ where $x$ belongs. When it is clear from the context, we just say occurrence of an element instead of subgroup-occurrence of an element.
\end{defin}

If $G$ is not a finite group, there could be elements $x\in G$ such that $\occ_G(x)$ is not a finite value. This does not happen when $G$ is a finite group. Independently whether $G$ is finite or not, if $e$ is its identity element, the value $\occ_G(e)$ coincides with the cardinal number of the collection of all subgroups that are contained in $G$.
\begin{ejem}\label{067}
Let us consider the additive group $\Z$ of integers. Using the fact that $\Z$ is a principal ideal domain, we can prove that $\occ_{\Z}(m)<\infty$, for any $m\neq0$. In fact, we have the values\[\occ_{\Z}(m)=\left\{\begin{array}{lr}\tau(|m|), & m\neq0;\\\infty, & m=0,\end{array}\right.\] where $\tau$ is the number of divisors funtion and $|m|$ is the absolute value of $m$.
\end{ejem}
The following proposition presents basic properties that follow from the definition of the subgroup-occurrence of an element in a group.

\begin{propo}\label{056}
 Let $G$ be a group with identity element $e$. Then
 \begin{enumerate}
  \item\label{056a} $G=\left\{e\right\}\iff\occ_G(e)=1$.
  \item\label{056b} $G$ is a cyclic group $\iff\exists x\in G; \occ_G(x)=1$.
  \item\label{056e} $\occ_G(x)=\occ_G(x^{-1})$, for all $x\in G$.
  \item\label{056.1} If the order of $G$ is finite then
  \begin{enumerate}
  \item\label{056c} $\occ_G(e)=2\iff |G|$ is a prime number.
  \item\label{056d} $\occ_G(x)=\occ_G(e)\iff x=e$.
  \end{enumerate}
 \end{enumerate}
\end{propo}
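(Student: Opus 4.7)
The proof proposal is to dispatch the five assertions in order by directly comparing the two collections of subgroups whose cardinalities we want to relate. Throughout, I will use freely that for any $x\in G$, the cyclic subgroup $\langle x\rangle$ and $G$ itself both contain $x$, and that $\{e\}$ is the unique subgroup of minimal cardinality.

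For (\ref{056a}), the forward direction is trivial since $\{e\}$ has only one subgroup; for the converse, note that $\{e\}\leqslant G$ and $G\leqslant G$ both contain $e$, so if they are counted only once in $\occ_G(e)$ they must coincide. For (\ref{056b}), if $G=\langle g\rangle$ then any $H\leqslant G$ with $g\in H$ must contain $\langle g\rangle=G$, so $H=G$, giving $\occ_G(g)=1$; conversely, if some $x$ satisfies $\occ_G(x)=1$, then as $\langle x\rangle$ and $G$ both contain $x$ we get $G=\langle x\rangle$. Part (\ref{056e}) is immediate from the observation that the maps $H\mapsto H$ set up a bijection between $\{H\leqslant G\mid x\in H\}$ and $\{H\leqslant G\mid x^{-1}\in H\}$, since a subgroup is closed under inversion.

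For the finite-order claims, I would argue (\ref{056c}) as follows. If $|G|$ is prime, Lagrange forces the only subgroups to be $\{e\}$ and $G$, so $\occ_G(e)=2$. Conversely, if $\occ_G(e)=2$, the only subgroups are $\{e\}$ and $G$; taking any $x\neq e$ in $G$ forces $\langle x\rangle=G$ (else $\langle x\rangle$ would be a third subgroup), so $G$ is cyclic of some order $n=|G|$. Since the subgroup lattice of a finite cyclic group of order $n$ is in bijection with the divisors of $n$, we get $\tau(n)=2$, hence $n$ is prime. For (\ref{056d}), the reverse implication is trivial; for the forward direction, observe that every subgroup containing $x$ contains $e$, so $\{H\leqslant G\mid x\in H\}\subseteq\{H\leqslant G\mid e\in H\}$. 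If $x\neq e$, then $\{e\}$ lies in the right-hand collection but not the left, so the inclusion is strict, and \emph{finiteness of $G$} (which makes $\occ_G(e)$ finite) upgrades strict inclusion to strict inequality of cardinals, yielding $\occ_G(x)<\occ_G(e)$.

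The only step that requires genuine work is (\ref{056c}), where the classification of finite groups with exactly two subgroups rests on the fact that a group in which every element generates the whole group is cyclic, combined with the divisor-count formula for the subgroup lattice of a finite cyclic group. The rest of the proposition reduces to set-theoretic comparisons of subgroup collections, and the role of finiteness in (\ref{056.1}) is exactly to prevent the pathology where a strict inclusion of infinite sets can still have equal cardinalities, as illustrated by Example~\ref{067}.
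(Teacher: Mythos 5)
Your proof is correct and follows essentially the same route as the paper: each part is handled by directly comparing the relevant collections of subgroups, with finiteness used in (\ref{056d}) to pass from a strict inclusion to a strict cardinality inequality. The only difference is that in (\ref{056c}) you spell out the classification step (every nonidentity element generates $G$, hence $G$ is cyclic with $\tau(|G|)=2$) that the paper dismisses with ``as we know''; this is a welcome filling-in of detail rather than a different argument.
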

\begin{proof}
 \ref{056a}. It is ovious from the fact that $\occ_G(e)$ coincides with the number of subgroups contained in $G$.
 
 \ref{056b}. Let $x$ be an element of $G$ such that $G=\langle x\rangle$. If $H\leqslant G$ such that $x\in H$, then $G=\langle x\rangle\subset H$, i.e, $G=H$ and therefore $\occ_G(x)=1$. Conversely, if $x\in G$ is an element such that $\occ_G(x)=1$, then necessarily $G=\langle x\rangle$ because $x\in\langle x\rangle$.
 
 \ref{056e}. If $H$ is a subgroup of $G$ such that $x\in H$, we know that \[x\in H\iff x^{-1}\in H.\] So, the collection of all soubgroups containing $x$ coincides with the collection of all subgroups containing $x^{-1}$. The affirmation follows.
 
 \ref{056c}. The implication ``$\Longleftarrow$'' is obvious. Conversely, if that $\occ_G(e)=2$ then we have that the only subgroups of $G$ are $\{e\}$ and $G$, but as we know, this necessarily implies that the order order of $G$ must be a prime number.
 
 \ref{056d}. The implication ``$\Longleftarrow$'' is also obvious. Conversely, if $X$ is the set of all subgroups of $G$ where $x$ belongs then \[X\subset\left\{H\,|\,H\leqslant G\right\}\] but $\occ_G(x)=\occ_G(e)$ then it follows that $X=\left\{H\,|\,H\leqslant G\right\}$, thus $x$ belongs to each subgroup of $G$, in particular $x\in\left\{e\right\}$ and $x=e$.
\end{proof}

To determine the formulas that we mentioned at the beginning of the section, we first consider a class of Brauer configurations induced by a finite group whose order is not a prime number.

\begin{propo}\label{059}
Let $G$ be a finite group with identity element $e$, and such that $|G|>1$ is not a prime number. Let $\G=(\G_0,\G_1,\mu,\oo)$ be the configuration defined by
\begin{enumerate}
 \item $\G_0=G$.
 \item $\G_1=\left\{H\,|\,H\leqslant G\textrm{ and }H\neq\left\{e\right\}\right\}$.
 \item $\mu:G\to\Z_{>0}$ is a function such that $\mu(e)=1$.
 \item $\oo$ is a chosen fixed orientation over the objects in $\G_1$.
\end{enumerate}
Then $\G$ is a Brauer configuration.
\end{propo}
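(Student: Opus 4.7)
The plan is to verify conditions C1--C3 of the Brauer configuration definition, since items (1)--(4) are built into the construction of $\G$: the sets $\G_0$ and $\G_1$ are finite because $G$ is finite and a finite group has only finitely many subgroups; each polygon $H\in\G_1$ is automatically a multiset of vertices in which $\occ(\alpha,H)\in\{0,1\}$ because subgroups are ordinary sets; and any cyclic ordering $\oo$ of the polygons around each nontruncated vertex can be chosen since $\G_1$ is finite.

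For C1, every $g\in G$ must lie in some polygon. If $g\neq e$ then $\langle g\rangle$ is a nontrivial subgroup containing $g$, hence $\langle g\rangle\in\G_1$; for $g=e$, since $|G|>1$ one can pick any $x\neq e$ and note $e\in\langle x\rangle\in\G_1$. For C2, each $H\in\G_1$ is by construction a nontrivial subgroup, so $|H|\geqslant 2$, i.e.\ $H$ has at least two vertices.

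The substantive step is C3: for each $H\in\G_1$ I must produce $\alpha\in H$ with $\val(\alpha)\mu(\alpha)>1$. The key observation is that for $\alpha\neq e$ one has $\val(\alpha)=\occ_G(\alpha)$, because such $\alpha$ cannot belong to the trivial subgroup (the only subgroup excluded from $\G_1$). I split into two cases. Case (i): $H\neq G$. Pick any $\alpha\in H$ with $\alpha\neq e$; then $\alpha$ lies in the two distinct nontrivial subgroups $H$ and $G$, so $\val(\alpha)\geqslant 2$. Case (ii): $H=G$. Here I invoke the hypothesis that $|G|$ is not prime: by Cauchy's theorem there is a prime $p\mid |G|$ with $p<|G|$, and an element of order $p$ generates a proper nontrivial subgroup $K\neq G$; any $\alpha\in K\setminus\{e\}$ then lies in both $K$ and $G$, again giving $\val(\alpha)\geqslant 2$. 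In either case $\val(\alpha)\mu(\alpha)\geqslant\val(\alpha)\geqslant 2>1$. Case (ii) is the only place where ``$|G|$ is not prime'' is genuinely used: were $|G|$ prime, $G$ itself would be the unique polygon, every $\val$ would equal $1$, and C3 would fail whenever $\mu\equiv 1$. This is the main (and essentially only) obstacle; the remaining verifications are immediate from the definitions.
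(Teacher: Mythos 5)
Your proof is correct and follows essentially the same route as the paper: the only substantive point is condition C3, and both arguments settle it by invoking Cauchy's theorem to produce a proper nontrivial subgroup of the composite group $G$. The paper is slightly more economical — since $e$ lies in every polygon and $\val(e)\ge 2$ as soon as two distinct nontrivial subgroups exist, it verifies C3 for all polygons at once, without your case split on $H=G$ versus $H\neq G$.
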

\begin{proof}
 The only thing we need to guarantee is that each polygon in $\G_1$ has at least one non-truncated vertex. In fact, it is clear that $e\in H$ for any $H\in\G_1$. Now, if $p$ is a prime divisor of $|G|$ then by Cauchy's Theorem there exists $x\in G$ such that $|x|=p$. Because of this we also obtain an object $\langle x\rangle\in\G_1$ such that \[\left\{e\right\}\subsetneqq\langle x\rangle\subsetneqq G,\] which implies that $\val(e)\mu(e)=\val(e)\ge2$, thus we can affirm that each polygon in $\G_1$ has at least one non-truncated vertex. The result follows.
\end{proof}

Let $G$ be a finite group with identity element $e$, and such that $|G|>1$ is not a prime number. Let $\G=(\G_0,\G_1,\mu,\oo)$ be an induced Brauer configuration of the type in Proposition \ref{059}. If $\val:\G_0\to\Z_{>0}$ is the valence function associated to the Brauer configuration $\G$, we can easily see that \[\val(x)=\left\{\begin{array}{lr}\occ_G(x), & x\neq e;\\\occ_G(x)-1, & x=e.\end{array}\right.\] Each polygon in $\G_1$ is a set, therefore we have the property  \[\occ(x,H)\le1,\textrm{ for all }x\in G,\] and for each $H\in\G_1$. By applying the formula in Proposition \ref{051} to the polygon $H$ in $\G_1$ we obtain that

\[
 \sum_{x\in H}\mu(x)\val(x) = \sum_{\{e\}\neq L\leqslant G}\left(\sum_{x\in H\cap L}\mu(x)\right),
\]
then by using the expression for the valence given above this equality can be expressed as
\[\sum_{x\in H\setminus\{e\}}\mu(x)\occ_G(x)+\occ_G(e)-1=\sum_{\{e\}\neq L\leqslant G}\left(\sum_{x\in H\cap L}\mu(x)\right),\] which is the same like
\begin{equation}\label{069}
\sum_{x\in H}\mu(x)\occ_G(x)=\sum_{L\leqslant G}\left(\sum_{x\in H\cap L}\mu(x)\right).
\end{equation}

Observe that this equality is also satisfied even when $H=\left\{e\right\}$.

\begin{theo}\label{057}
 Let $G$ be a finite group with identity element $e$ and let $\mu:G\to\Z_{>0}$ be a function such that $\mu(e)=1$. If $H\leqslant G$ then \[\sum_{x\in H}\mu(x)\occ_G(x)=\sum_{L\leqslant G}\left(\sum_{x\in H\cap L}\mu(x)\right).\]
\end{theo}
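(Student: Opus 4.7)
The plan is to recognize that the equality in display (\ref{069}) of the preceding discussion is essentially already the theorem, so the work is really to package that derivation and to dispatch the edge cases not covered by it. First I would treat the main case: assume $|G|>1$ and $|G|$ is not a prime number, and let $H\leqslant G$ with $H\neq\{e\}$. By Proposition \ref{059}, $G$ together with $\mu$ gives rise to a Brauer configuration $\G=(\G_0,\G_1,\mu,\oo)$ whose vertex set is $G$ and whose polygons are the non-trivial subgroups of $G$; in particular $H\in\G_1$. Applying Proposition \ref{051} to the polygon $H$ and using that each polygon is a set (so $\occ(x,H)\leq 1$ and the intersection $\overline{H}\cap\overline{L}$ is just $H\cap L$), the two sides of Proposition \ref{051} transcribe directly into sums over $x\in H$ and over $\{e\}\neq L\leqslant G$. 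Substituting $\val(x)=\occ_G(x)$ for $x\neq e$ and $\val(e)=\occ_G(e)-1$ (the trivial subgroup being absent from $\G_1$) and simplifying yields exactly (\ref{069}), which is the claimed identity after including the trivial subgroup on the right.

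Next I would observe that the case $H=\{e\}$ is trivial in full generality: the left-hand side collapses to $\mu(e)\occ_G(e)=\occ_G(e)$ and the right-hand side collapses to $\sum_{L\leqslant G}\mu(e)=\sum_{L\leqslant G}1$, which is precisely $\occ_G(e)$ by definition. This handles the exceptional polygon missing from $\G_1$ in the Brauer configuration construction, and also disposes of the trivial group $G=\{e\}$.

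Finally I would handle the remaining case $|G|=p$ prime. Here the only subgroups are $\{e\}$ and $G$, so $\occ_G(e)=2$ and $\occ_G(x)=1$ for every $x\neq e$. If $H=\{e\}$ the identity is already established above; if $H=G$, a direct expansion shows that both sides equal $2+\sum_{x\in G\setminus\{e\}}\mu(x)$. The main obstacle is purely bookkeeping: correctly matching Proposition \ref{051}'s sums over polygons and intersections with the subgroup-indexed sums in the theorem, and carefully accounting for the identity vertex $e$, which behaves differently from the other vertices both because $\mu(e)=1$ is forced and because $\val(e)$ in the Brauer configuration differs from $\occ_G(e)$ by one, owing to the exclusion of the trivial subgroup from $\G_1$.
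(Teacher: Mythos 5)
Your proposal is correct and follows essentially the same route as the paper: it derives the identity from Proposition \ref{051} applied to the Brauer configuration of Proposition \ref{059} (yielding the paper's display (\ref{069})), and then handles the trivial subgroup and the prime-order case by direct computation, exactly as the paper does. The bookkeeping with $\val(e)=\occ_G(e)-1$ and the reinstatement of the trivial subgroup on the right-hand side matches the paper's argument.
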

\begin{proof}
For what we just did previously, we only need to demonstrate the equality when the order of $G$ is a prime number. So, if $|G|$ is a prime number then by Proposition \ref{056} we have

\begin{equation}\label{070}
\begin{array}{rclr}
\occ_G(e) & = & 2, & \\\occ_G(x) & = & 1, &\forall x\in G\setminus\{e\}.
\end{array}
\end{equation}

As we said before, the equality in (\ref{069}) is satisfied for $H=\{e\}$, and due to the fact that $G$ contains only the trivial subgroups, it is enough to check the equality for $H=G$. So, by the equalities in (\ref{070}) it follows that

\begin{eqnarray*}
\sum_{x\in G}\mu(x)\occ_G(x) & = & 2+\sum_{x\in G\setminus\{e\}}\mu(x)\\ & = & 1+\sum_{x\in G}\mu(x),
\end{eqnarray*}
which clearly coincides with (\ref{069}) for the case $H=G$.
\end{proof}

\begin{cor}\label{071}
Let $G$ be a finite group. If $H\leqslant G$ then \[\sum_{x\in H}\occ_G(x)=\sum_{L\leqslant G}|H\cap L|.\]
\end{cor}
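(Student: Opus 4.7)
The plan is to deduce this corollary as an immediate specialization of Theorem \ref{057}. The hypothesis of that theorem allows for any multiplicity function $\mu : G \to \Z_{>0}$ with $\mu(e) = 1$, so the natural choice here is the constant function $\mu \equiv 1$, which trivially satisfies $\mu(e) = 1$.

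With this choice, the left-hand side of the identity in Theorem \ref{057} collapses to $\sum_{x \in H} \occ_G(x)$, since each factor $\mu(x)$ is equal to $1$. Similarly, the inner sum on the right-hand side becomes $\sum_{x \in H \cap L} 1 = |H \cap L|$, so the right-hand side becomes $\sum_{L \leqslant G} |H \cap L|$. Equating both sides gives exactly the claimed formula.

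There is no real obstacle: the corollary is a direct substitution, and the proof is essentially a one-line appeal to Theorem \ref{057}. The only thing worth verifying, trivially, is that the function $\mu \equiv 1$ is indeed admissible in the hypothesis, which it is. This pattern (extracting a purely group-theoretic identity by running Theorem \ref{057} with a convenient $\mu$) is presumably what the author wishes to emphasise, since it isolates the combinatorial content of the Brauer-configuration machinery from the choice of multiplicity.
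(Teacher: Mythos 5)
Your proposal is correct and is precisely the paper's own argument: specialize Theorem \ref{057} to the constant function $\mu\equiv1$, which collapses the left side to $\sum_{x\in H}\occ_G(x)$ and the inner sums on the right to $|H\cap L|$. Nothing further is needed.
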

\begin{proof}
The result follows by defining $\mu:G\to\Z_{>0}$ as the constant function $\mu\equiv1$.
\end{proof}
Setting $G=\Z_n$ the cyclic group of the integers module $n$, we obtain the following expressions.
\begin{cor}\label{058}
 Let $n$ be a positive integer. If $k$ is a positive divisor of $n$ then \begin{eqnarray*}\sum_{x\in\langle\frac{\ov{n}}{k}\rangle}\occ_{\Z_n}(x) & = & \sum_{d\,|\,n}\gcd(k,d),\\\sum_{x\in\langle\frac{\ov{n}}{k}\rangle}|x|\occ_{\Z_n}(x) & = & \sum_{d\,|\,n}\left(\sum_{t\,|\,\gcd(k,d)}\phi(t)t\right),\end{eqnarray*} where $\phi$ is the Euler phi-function.
\end{cor}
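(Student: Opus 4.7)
The plan is to specialize Theorem \ref{057} (respectively its Corollary \ref{071}) to the cyclic group $G=\Z_n$ with $H=\langle\tfrac{\ov n}{k}\rangle$, and then translate the right-hand side using two standard facts about cyclic groups: the lattice of subgroups of $\Z_n$ is in bijection with the divisors of $n$, and the intersection of two subgroups of orders $a$ and $b$ in a cyclic group has order $\gcd(a,b)$.

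First I would derive the first identity. Taking $\mu\equiv1$, Corollary \ref{071} gives
\[
\sum_{x\in H}\occ_{\Z_n}(x)=\sum_{L\leqslant\Z_n}|H\cap L|.
\]
Every subgroup of $\Z_n$ has the form $L_d=\langle\tfrac{\ov n}{d}\rangle$ with $|L_d|=d$ for a unique divisor $d$ of $n$, and $H=L_k$ has order $k$. Since $\Z_n$ is cyclic, $|H\cap L_d|=|L_k\cap L_d|=\gcd(k,d)$. Substituting into the right-hand side produces $\sum_{d\,|\,n}\gcd(k,d)$, as claimed.

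For the second identity I would apply Theorem \ref{057} with the choice $\mu(x)=|x|$ (the order of $x$ in $\Z_n$), which is admissible because $|e|=1=\mu(e)$. This yields
\[
\sum_{x\in H}|x|\occ_{\Z_n}(x)=\sum_{L\leqslant\Z_n}\left(\sum_{x\in H\cap L}|x|\right).
\]
As above, $H\cap L_d$ is a cyclic group of order $\gcd(k,d)$. The standard partition of a cyclic group of order $m$ by element orders states that for each divisor $t$ of $m$ there are exactly $\phi(t)$ elements of order $t$; applying this to $m=\gcd(k,d)$ gives
\[
\sum_{x\in H\cap L_d}|x|=\sum_{t\,|\,\gcd(k,d)}\phi(t)\,t,
\]
and summing over divisors $d$ of $n$ delivers the claimed formula.

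There is no real obstacle here: the proof is essentially a dictionary translation. The only care needed is to verify that $\mu(x)=|x|$ satisfies the hypothesis $\mu(e)=1$ of Theorem \ref{057} (it does, since the identity has order one), and to invoke the classical totient decomposition of a cyclic group cleanly so that the double sum on the right is written exactly in the stated form.
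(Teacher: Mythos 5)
Your proof is correct and follows essentially the same route as the paper: both identities are obtained by specializing Corollary \ref{071} (resp.\ Theorem \ref{057} with $\mu(x)=|x|$) to $\Z_n$, using that the subgroups of $\Z_n$ are the $\langle\tfrac{\ov n}{d}\rangle$ for $d\mid n$, that $\langle\tfrac{\ov n}{k}\rangle\cap\langle\tfrac{\ov n}{d}\rangle$ has order $\gcd(k,d)$, and that a cyclic group of order $m$ has $\phi(t)$ elements of order $t$ for each $t\mid m$. No gaps.
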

\begin{proof}
 If $k$ is a positive divisor of $n$ we know that $\langle\frac{\ov{n}}{k}\rangle$ is the unique subgroup  of $\Z_n$ of order $k$. On the other hand, if $d$ is another positive divisor of $n$ then $\langle\frac{\ov{n}}{k}\rangle\cap\langle\frac{\ov{n}}{d}\rangle=\langle\frac{\ov{n}}{\gcd(k,d)}\rangle$, thus we obtain
 \begin{eqnarray*}
  \sum_{x\in\langle\frac{\ov{n}}{k}\rangle}\occ_{\Z_n}(x) & = & \sum_{d\,|\,n}\left|\left\langle\frac{\ov{n}}{k}\right\rangle\cap\left\langle\frac{\ov{n}}{d}\right\rangle\right|\\ & = & \sum_{d\,|\,n}\gcd(k,d).
 \end{eqnarray*}

The function $\mu(x)=|x|$, for all $x\in\Z_n$, clearly satisfies the condition $\mu(\ov{0})=1$, then
\begin{eqnarray*}
\sum_{x\in\langle\frac{\ov{n}}{k}\rangle}|x|\occ_{\Z_n}(x) & = & \sum_{d\,|\,n}\left(\sum_{x\in\langle\frac{\ov{n}}{\gcd(k,d)}\rangle}|x|\right).
\end{eqnarray*}
Now, if $t$ is a positive divisor of $\gcd(k,d)$, then the subgroup $\langle\frac{\ov{n}}{\gcd(k,d)}\rangle$ has exactly $\phi(t)$ elements of order $t$, hence  \[\sum_{x\in\langle\frac{\ov{n}}{\gcd(k,d)}\rangle}|x|=\sum_{t\,|\,\gcd(k,d)}\phi(t)t,\] and the result follows.
\end{proof}
In particular, if $k=n$ then we have that
\begin{eqnarray}
\sum_{x\in\Z_n}\occ_{\Z_n}(x) & = & \sigma(n),\label{075}\\ \sum_{x\in\Z_n}|x|\occ_{\Z_n}(x) & = & \sum_{d\,|\,n}\left(\sum_{t\,|\,d}\phi(t)t\right),\nonumber
\end{eqnarray}
where $\sigma$ is the sum of divisors function. The equality in (\ref{075}) implies the following property.

\begin{cor}\label{076}
Let $m$ and $n$ be positive integers such that $\gcd(m,n)=1$. Then  \[\sum_{x\in\Z_{mn}}\occ_{\Z_{mn}}(x)=\left(\sum_{x\in\Z_{m}}\occ_{\Z_{m}}(x)\right)\left(\sum_{x\in\Z_{n}}\occ_{\Z_{n}}(x)\right).\]
\end{cor}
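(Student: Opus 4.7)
The plan is to exploit the identity in equation (\ref{075}), namely $\sum_{x\in\Z_n}\occ_{\Z_n}(x) = \sigma(n)$, which has just been established as a special case (taking $k=n$) of Corollary \ref{058}. Once this identification is in place, the claim reduces to the classical fact that the sum-of-divisors function $\sigma$ is multiplicative on coprime integers.

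First, I would apply (\ref{075}) to $\Z_{mn}$, $\Z_m$, and $\Z_n$ separately to rewrite each of the three sums in the statement as $\sigma(mn)$, $\sigma(m)$, and $\sigma(n)$, respectively. Then I would invoke the multiplicativity of $\sigma$: since $\gcd(m,n)=1$, the divisors of $mn$ factor uniquely as products $d_1 d_2$ with $d_1 \mid m$ and $d_2 \mid n$, so
\[
\sigma(mn) \;=\; \sum_{d \mid mn} d \;=\; \sum_{d_1 \mid m}\sum_{d_2 \mid n} d_1 d_2 \;=\; \sigma(m)\sigma(n).
\]
Combining the two observations yields the desired equality.

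There is essentially no obstacle here; the only subtlety is to make sure that (\ref{075}) is applied correctly (with $k=n$) rather than the more general Corollary \ref{058}, and to cite the multiplicativity of $\sigma$ as the classical number-theoretic fact it is. The proof should be just two or three lines.
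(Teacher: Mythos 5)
Your proposal is correct and follows exactly the paper's own route: the paper's proof is the one-line instruction to apply the multiplicativity of $\sigma$, relying implicitly on the identity $\sum_{x\in\Z_n}\occ_{\Z_n}(x)=\sigma(n)$ established just before the corollary. Your more explicit unpacking of the divisor factorization is fine but adds nothing beyond what the paper intends.
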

\begin{proof}
Apply the fact that $\sigma$ is a multiplicative function.
\end{proof}
Let us see the following easy example.
\begin{ejem}\label{072}
The cyclic group $\Z_{12}$ has exactly six subgroups, which are given by

\[
\begin{array}{rcl}
\langle\ov{0}\rangle & = & \{\ov{0}\},\\
\langle\ov{6}\rangle & = & \{\ov{0},\ov{6}\},\\
\langle\ov{4}\rangle & = & \{\ov{0},\ov{4},\ov{8}\},
\end{array}
\begin{array}{rcl}
\langle\ov{3}\rangle & = & \{\ov{0},\ov{3},\ov{6},\ov{9}\},\\
\langle\ov{2}\rangle & = & \{\ov{0},\ov{2},\ov{4},\ov{6},\ov{8},\ov{10}\}, \\
\langle\ov{1}\rangle & = & \Z_{12}.
\end{array}
\]
These subgroups can be represented by the subgroup lattice diagram

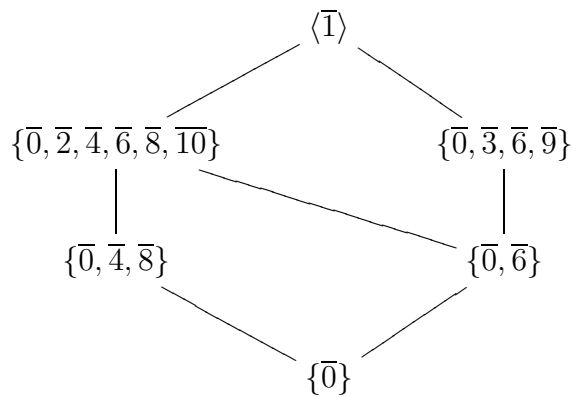
\begin{figure}[H]
\[
\xymatrix{
 &\langle\ov{1}\rangle\ar@{-}[dl]\ar@{-}[dr] &  \\ \{\ov{0},\ov{2},\ov{4},\ov{6},\ov{8},\ov{10}\}\ar@{-}[d]\ar@{-}[rrd] & & \{\ov{0},\ov{3},\ov{6},\ov{9}\}\ar@{-}[d]\\ \{\ov{0},\ov{4},\ov{8}\}\ar@{-}[rd] & & \{\ov{0},\ov{6}\}\ar@{-}[ld]\\ &\{\ov{0}\} & 
}
\]
\caption{Subgroup lattice of $\Z_{12}$.}
\end{figure}
By using this graphic representation, it can be calculated the following table of occurrences of elements in $\Z_{12}$.
\begin{table}[H]
\begin{center}
  \begin{tabular}{C||C|C|C|C|C|C|C|C|C|C|C|C}
x & \ov{0} & \ov{1} & \ov{2} & \ov{3} & \ov{4} & \ov{5} & \ov{6} & \ov{7} & \ov{8} & \ov{9} & \ov{10} & \ov{11} \\ \hline \occ_{\Z_{12}}(x) & 6 & 1 & 2 & 2 & 3 & 1 & 4 & 1 & 3 & 2 & 2 & 1
  \end{tabular}
\end{center}
\caption{Table of occurrences of $\Z_{12}$.}
\end{table}
Now, considering the subgroup $\langle\frac{\ov{12}}{4}\rangle=\langle\ov{3}\rangle=\{\ov{0},\ov{3},\ov{6},\ov{9}\}$ we see that the sum of the occurrences of  the elements in this group is \begin{eqnarray*}\occ_{\Z_{12}}(\ov{0})+\occ_{\Z_{12}}(\ov{3})+\occ_{\Z_{12}}(\ov{6})+\occ_{\Z_{12}}(\ov{9}) & = & 6+2+4+2\\ & = & 14,\end{eqnarray*} and the sum of the greatest common divisors is
{\small
\begin{multline*}
\gcd(4,1)+\gcd(4,2)+\gcd(4,3)+\gcd(4,4)+\gcd(4,6)+\gcd(4,12) =\\ 1+2+1+4+2+4 = 14.
\end{multline*}
} It is an easy exercise to check the respective equalities for the other subgroups.
\end{ejem}

Once again, let $G$ be a finite group with identity element $e$ and such that $|G|>1$ is not a prime number. Let $\G$ be an induced Brauer configuration of the type in Proposition \ref{059}. From the way that $\G$ is defined, it is clear that \[|\G_1|=\val(e)=\occ_G(e)-1.\] Now, let $\L$ be the Brauer configuration algebra associated to the Brauer configuration $\G$. By \cite[Proposition 3.13]{brau} the vector dimension of $\L$ over the field $K$ can be calculated by the expression
\begin{equation*}
\dim_K\L=2|\G_1|+\sum_{x\in\G_0}\val(x)(\mu(x)\val(x)-1).
\end{equation*}
Replacing the equivalent expressions of valence that we know and that of $|\G_1|$, we obtain that the vector dimension of $\L$ is equal to
{\small
\begin{multline*}
 2(\occ_G(e)-1)+\sum_{x\in G\setminus\left\{e\right\}}\occ_G(x)(\mu(x)\occ_G(x)-1)+(\occ_G(e)-1)(\occ_G(e)-2)=\\\occ_G(e)(\occ_G(e)-1)+\sum_{x\in G\setminus\left\{e\right\}}\occ_G(x)(\mu(x)\occ_G(x)-1)=\\\sum_{x\in G}\occ_G(x)(\mu(x)\occ_G(x)-1),
\end{multline*}
}
that is,
\begin{equation}\label{060}
 \dim_k\L=\sum_{x\in G}\occ_G(x)(\mu(x)\occ_G(x)-1).
\end{equation}
On the other way, by Proposition \ref{050} we have that the vector dimension of $\L$ can also be calculated as

{\small
\begin{multline*}
 2(\occ_G(e)-1)+\sum_{\left\{e\right\}\neq H\leqslant G}\left(\sum_{x\in H}(\mu(x)\val(x)-1)\right)=\\2(\occ_G(e)-1)+\sum_{\left\{e\right\}\neq H\leqslant G}\left(\sum_{x\in H\setminus\{e\}}(\mu(x)\occ_G(x)-1)+\occ_G(e)-2\right)=\\2(\occ_G(e)-1)+\sum_{\left\{e\right\}\neq H\leqslant G}\left(\sum_{x\in H}(\mu(x)\occ_G(x)-1)-1\right)=\\2(\occ_G(e)-1)+\sum_{\left\{e\right\}\neq H\leqslant G}\left(\sum_{x\in H}(\mu(x)\occ_G(x)-1)\right)-(\occ_G(e)-1)=\\ \sum_{ H\leqslant G}\left(\sum_{x\in H}(\mu(x)\occ_G(x)-1)\right),
\end{multline*}
}
that is,
\begin{equation}\label{061}
 \dim_K\L=\sum_{ H\leqslant G}\left(\sum_{x\in H}(\mu(x)\occ_G(x)-1)\right),
\end{equation}
so, by the expression in (\ref{060}) we have that \[\sum_{x\in G}\occ_G(x)(\mu(x)\occ_G(x)-1)=\sum_{ H\leqslant G}\left(\sum_{x\in H}(\mu(x)\occ_G(x)-1)\right).\] By distributing the respective summation symbols in this equality and applying Corollary \ref{071}, we obtain \begin{equation}\label{063}\sum_{x\in G}\mu(x)\occ_G(x)^2=\sum_{H\leqslant G}\left(\sum_{x\in H}\mu(x)\occ_G(x)\right).\end{equation}
\begin{theo}\label{062}
 Let $G$ be a finite group with identity element $e$ and let $\mu:G\to\Z_{>0}$ be a function such that $\mu(e)=1$. Then \begin{eqnarray*}\sum_{x\in G}\mu(x)\occ_G(x)^2 & = & \sum_{H\leqslant G}\left(\sum_{x\in H}\mu(x)\occ_G(x)\right)\\ & = & \sum_{H\leqslant G}\left(\sum_{L\leqslant G}\left(\sum_{x\in H\cap L}\mu(x)\right)\right).\end{eqnarray*}
\end{theo}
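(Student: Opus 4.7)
The plan is to establish the two equalities separately, relying heavily on the machinery already developed in the preceding pages. For the second equality, I would fix a subgroup $H \leqslant G$ and apply Theorem \ref{057} verbatim, which yields $\sum_{x\in H}\mu(x)\occ_G(x) = \sum_{L\leqslant G}\sum_{x\in H\cap L}\mu(x)$; summing this identity over all $H \leqslant G$ gives the second equality immediately. So essentially no work is required there.

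For the first equality the strategy is to split into cases according to $|G|$. When $|G| > 1$ is not a prime, the equality is exactly equation (\ref{063}), which was already derived in the paragraphs preceding the theorem: one attaches to $G$ an induced Brauer configuration in the sense of Proposition \ref{059}, considers its associated Brauer configuration algebra $\L$, and computes $\dim_K\L$ in two different ways — using \cite[Proposition 3.13]{brau} to obtain (\ref{060}) and using Proposition \ref{050} of the present paper to obtain (\ref{061}). Equating these two expressions and then eliminating the linear $\occ_G(x)$ terms via Corollary \ref{071} produces the first equality.

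What remains is to patch the two cases excluded by Proposition \ref{059}, namely the trivial group and the groups of prime order, where no Brauer configuration of the prescribed type exists. Both cases can be checked directly. For $G = \{e\}$ both sides collapse to $\mu(e) = 1$. For $|G| = p$ prime, Proposition \ref{056} gives $\occ_G(e) = 2$ and $\occ_G(x) = 1$ for $x \neq e$, and the subgroup lattice contains only $\{e\}$ and $G$; substituting these values on both sides, a short bookkeeping calculation shows that each side equals $3 + \sum_{x\in G}\mu(x)$.

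The main obstacle is conceptual rather than technical: the entire combinatorial content of the first equality is packaged inside (\ref{063}), whose derivation rests on two genuinely different dimension formulas for a Brauer configuration algebra agreeing with one another. Once that identity is in hand, the theorem is essentially a stitching argument — apply Theorem \ref{057} for the second equality, invoke (\ref{063}) for the generic case of the first equality, and verify the two degenerate cases by hand.
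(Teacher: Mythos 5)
Your proposal is correct and follows essentially the same route as the paper: the second equality by summing Theorem \ref{057} over all subgroups, the first equality via the identity (\ref{063}) obtained from the two dimension formulas (\ref{060}) and (\ref{061}) together with Corollary \ref{071}, and a direct check of the trivial and prime-order cases (your value $3+\sum_{x\in G}\mu(x)$ for the prime case agrees with the paper's computation).
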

\begin{proof}
 The equality is trivial if $G=\{e\}$. Once again, we just need to demonstrate the equality when the order of $G$ is a prime number. In fact, if $|G|$ is a prime number we know that \[\begin{array}{rclr}\occ_G(e) & = & 2, & \\\occ_G(x) & = & 1, &\forall x\in G\setminus\{e\},\end{array}\] or equivalently \[\begin{array}{rclr}\occ_G(e)^2 & = & 2\,\occ_G(e), & \\\occ_G(x)^2 & = & 1, &\forall x\in G\setminus\{e\}.\end{array}\]
 So, we have the equalities
\begin{eqnarray*}
 \occ_G(e)^2+\sum_{x\in G\setminus\{e\}}\mu(x) & = & 2\,\occ_G(e)+\sum_{x\in G\setminus\{e\}}\mu(x)\\ & = & \occ_G(e)+\sum_{x\in G}\mu(x)\occ_G(x), 
\end{eqnarray*} and due to the fact that $G$ contains only the trivial subgroups, this equality coincides with the one in (\ref{063}) and the result follows.
 \end{proof}
\begin{cor}\label{064}
 Let $G$ be a finite group. Then 
 
 \[\sum_{x\in G}\occ_G(x)^2=\sum_{H\leqslant G}\left(\sum_{L\leqslant G}|H\cap L|\right).\]
\end{cor}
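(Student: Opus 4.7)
The plan is to obtain this corollary as an immediate specialization of Theorem \ref{062}. My first step is to take $\mu:G\to\Z_{>0}$ to be the constant function $\mu\equiv1$, which trivially satisfies the hypothesis $\mu(e)=1$ required by the theorem. Hence Theorem \ref{062} applies verbatim to the pair $(G,\mu)$.

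The second step is to substitute $\mu\equiv1$ into the second equality of Theorem \ref{062}. On the left-hand side, $\sum_{x\in G}\mu(x)\occ_G(x)^2$ collapses to $\sum_{x\in G}\occ_G(x)^2$. On the right-hand side, the innermost sum becomes $\sum_{x\in H\cap L}\mu(x)=\sum_{x\in H\cap L}1=|H\cap L|$, and the identity of the corollary drops out at once.

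I do not foresee any real obstacle, since the statement is simply the unweighted instance of the weighted identity already proved. The only point worth remarking on is that the constant choice $\mu\equiv1$ is a legitimate function satisfying the hypothesis, so no additional case analysis (for instance, separating the case where $|G|$ is prime, or $G=\{e\}$) is required here, as both have already been absorbed into the proof of Theorem \ref{062}.
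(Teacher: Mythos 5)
Your proposal is correct and coincides with the paper's own proof, which likewise obtains the corollary by setting $\mu\equiv1$ in Theorem \ref{062}; the substitution details you spell out are exactly the intended ones.
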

\begin{proof}
It follows by defining $\mu\equiv1$.
\end{proof}
By the same reasoning used in the proof of Corollary \ref{058}, we can easily prove the following corollary.
\begin{cor}\label{065}
 Let $n$ be a positive integer. Then \begin{eqnarray*}\sum_{x\in\Z_n}\occ_{\Z_n}(x)^2 & = & \sum_{k\,|\,n}\left(\sum_{d\,|\,n}\gcd(k,d)\right),\\\sum_{x\in\Z_n}|x|\occ_{\Z_n}(x)^2 & = & \sum_{k\,|\,n}\left(\sum_{d\,|\,n}\left(\sum_{t\,|\,\gcd(k,d)}\phi(t)t\right)\right).\end{eqnarray*}
\end{cor}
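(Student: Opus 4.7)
The plan is to instantiate Theorem \ref{062} at $G=\Z_n$ with two different choices of $\mu$, exactly as in the derivation of Corollary \ref{058}. The ingredients I need are the well-known facts that (i) the subgroups of $\Z_n$ are precisely $\langle\tfrac{\ov{n}}{k}\rangle$ as $k$ ranges over the positive divisors of $n$, and $\langle\tfrac{\ov{n}}{k}\rangle$ has order $k$; (ii) for any two divisors $k,d$ of $n$ one has $\langle\tfrac{\ov{n}}{k}\rangle\cap\langle\tfrac{\ov{n}}{d}\rangle=\langle\tfrac{\ov{n}}{\gcd(k,d)}\rangle$, of order $\gcd(k,d)$; and (iii) in any cyclic group of order $m$, for each $t\mid m$ there are exactly $\phi(t)$ elements of order $t$.

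For the first identity I would take $\mu\equiv1$, which trivially satisfies $\mu(\ov{0})=1$. The left-hand side of Theorem \ref{062} becomes $\sum_{x\in\Z_n}\occ_{\Z_n}(x)^2$. For the right-hand side, rewriting the double sum over subgroups as a double sum over divisors of $n$, and using (i)--(ii) to compute $|H\cap L|=\gcd(k,d)$, we obtain exactly $\sum_{k\mid n}\sum_{d\mid n}\gcd(k,d)$. (Equivalently, one may simply apply Corollary \ref{064} and then pass to divisor notation.)

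For the second identity I would take $\mu(x)=|x|$ (the order of $x$ in $\Z_n$), which also satisfies $\mu(\ov{0})=1$. The left-hand side of Theorem \ref{062} then reads $\sum_{x\in\Z_n}|x|\occ_{\Z_n}(x)^2$. For the right-hand side, the inner sum $\sum_{x\in H\cap L}|x|$ where $H\cap L=\langle\tfrac{\ov{n}}{\gcd(k,d)}\rangle$ is a cyclic group of order $\gcd(k,d)$; grouping elements by their order and using (iii), this inner sum equals $\sum_{t\mid\gcd(k,d)}\phi(t)t$. Reindexing the outer double sum over subgroups of $G$ as a double sum over divisors of $n$ yields the stated formula.

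Neither step is a genuine obstacle: both are essentially bookkeeping that converts the intrinsic statement of Theorem \ref{062} into the explicit divisor-sum form available for cyclic groups. If any subtlety arises it will be purely notational, namely being careful that the reindexing $H\leftrightarrow k$ and $L\leftrightarrow d$ is a bijection between subgroups of $\Z_n$ and positive divisors of $n$, so that no subgroup is double-counted or missed in the passage from Theorem \ref{062} to the announced formulas.
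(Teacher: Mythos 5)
Your proposal is correct and is exactly the argument the paper intends: the paper proves Corollary \ref{065} by citing ``the same reasoning used in the proof of Corollary \ref{058},'' i.e., specializing Theorem \ref{062} to $G=\Z_n$ with $\mu\equiv1$ and $\mu(x)=|x|$, reindexing subgroups by divisors, and using $\langle\tfrac{\ov{n}}{k}\rangle\cap\langle\tfrac{\ov{n}}{d}\rangle=\langle\tfrac{\ov{n}}{\gcd(k,d)}\rangle$ together with the count of elements of each order in a cyclic group. No gaps.
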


\section{The dimension of the center of a Brauer configuration algebra induced by a finite group}\label{sec_center}
In this section we consider pairs $(G,\mu)$, where $G$ is a nontrivial finite group with identity element $e$, and $\mu:G\to\Z_{>0}$ is a function such that $\mu(e)=1$. We will associate to a pair $(G,\mu)$ of this type a class $\GG_{(G,\mu)}$ formed by a finite collection of configurations that depending on both the order of $G$ and the multiplicity function $\mu$, the collection $\GG_{(G,\mu)}$ would be formed by Brauer configurations or by an only configuration which is not a Brauer configuration. Then, for each $\G\in\GG_{(G,\mu)}$ we will associate a $K$-algebra $\L_{\G}$. We finalize  the section by showing that when the pair $(G,\mu)$ has constant function $\mu\equiv1$, then the vector dimension of $Z(\L_{\G})$ coincides with the number of subgroups of $G$, and where $\L_{\G}$ is the $K$-algebra associated to $\G\in\GG_{(G,\mu)}$.

But before all that, we start with the following theorem about the vector dimension of the center of certain type of Brauer configuration algebras.
\begin{theo}\label{074}
Let $\G$ be a Brauer configuration with associated Brauer configuration algebra $\L=K\Q/I$, and such that $\G$ is connected and each of its polygons is a set. Then \[\dim_KZ(\L)=1+\sum_{\alpha\in\G_0}\mu(\alpha)+|\G_1|-|\G_0|.\]
\end{theo}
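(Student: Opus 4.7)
The strategy is to specialize the general formula for the dimension of the center of a Brauer configuration algebra established in \cite[Theorem 4.9]{mya}. That result expresses $\dim_K Z(\L)$ as a sum consisting of the base term $1+\sum_{\alpha\in\G_0}\mu(\alpha)+|\G_1|-|\G_0|$ plus two combinatorial corrections: one, call it $\#\mathcal{L}_\G$, accounting for occurrences $\occ(\alpha,V)\geq 2$ (that is, for polygons that are genuine multisets rather than sets), and another, call it $\#\mathcal{C}_\G$, accounting for the global graph-theoretic structure of $\G$, typically organized as a correction over its connected components.

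First I would restate \cite[Theorem 4.9]{mya} in the present notation, identifying the two corrections $\#\mathcal{L}_\G$ and $\#\mathcal{C}_\G$ explicitly in terms of $\occ$, $\val$, and the components of $\G$. The plan is then simply to show that under our hypotheses both corrections vanish. Since every polygon $V\in\G_1$ is a set, we have $\occ(\alpha,V)\leq 1$ for all $\alpha\in\G_0$, which kills every summand of $\#\mathcal{L}_\G$ term by term. Since $\G$ is connected, the decomposition underlying $\#\mathcal{C}_\G$ has a single component, and inspection of the definition shows that a single connected component contributes nothing beyond the baseline $|\G_1|-|\G_0|$ already present in the formula. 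With both corrections equal to zero, the general expression from \cite[Theorem 4.9]{mya} collapses to exactly $1 + \sum_{\alpha\in\G_0}\mu(\alpha) + |\G_1| - |\G_0|$.

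The main obstacle I expect is the bookkeeping in the first step: one must match the invariants $\#\mathcal{L}_\G$ and $\#\mathcal{C}_\G$ from \cite{mya} to the vertex- and polygon-data of the present $\G$, and in particular check that features still permitted by our hypotheses, such as truncated vertices or nontruncated vertices $\alpha$ with $\val(\alpha)=1$ and $\mu(\alpha)>1$ (which produce loops in $\Q$ yet do not come from a multiset polygon), do not give rise to hidden contributions to either correction. Once that matching is completed, the result follows by substitution, and one may optionally cross-check the answer by computing $\dim_K Z(\L)$ directly on a small connected example whose polygons are all sets (for instance, a path-like Brauer graph with generic multiplicities) to confirm that the corrections are absent.
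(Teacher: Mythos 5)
Your proposal is correct and follows essentially the same route as the paper: both specialize \cite[Theorem 4.9]{mya} and observe that, because every polygon is a set, there are no self-folded polygons, so the only loops in $\Q$ come from vertices with $\val=1$ and $\mu>1$ and the correction term (which in \cite{mya} is $\#\mathit{Loops}(\Q)-|\C_{\G}|$ rather than the two separate corrections you guessed) vanishes. The potential pitfall you flag — loops arising from $\val(\alpha)=1$, $\mu(\alpha)>1$ — is exactly the $|\C_{\G}|$ term that cancels against those loops, so your plan closes up as intended.
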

\begin{proof}
If $\Q$ is the induced quiver by the Brauer configuration $\G$, by the proof of \cite[Theorem 4.9]{mya} we have that there are two types of loops in $\Q$. Those induced by the set of vertices $\C_{\G}=\{\gamma\in\G_0\,|\,\val(\gamma)=1\textrm{ and }\mu(\gamma)>1\}$, and those induced by some self-folded polygons. But there are no self-folded polygons in $\G$ because all of them are sets. So, if $\Q$ has any loops, then these loops are induced by the vertices in $\C_{\G}$. Hence, we obtain that $\#\mathit{Loops}(\Q)=|\C_{\G}|$ and the result follows.
\end{proof}
Let $G$ be a non-trivial finite group with identity element $e$, and let $\mu:G\to\Z_{>0}$ a function such that $\mu(e)=1$. If $|G|$ is not a prime number, we denote by $\GG_{(G,\mu)}$ the class\footnote{As we know, given a Brauer configuration $\G$ we can define different orientations $\oo$ over the list of polygons of each vertex in $\G_0$, and each of these orientations gives a new Brauer configuration.} of all the Brauer configurations as constructed in Proposition \ref{059}. The class $\GG_{(G,\mu)}$ is indexed by all the possible orientations $\oo$ over the list of polygons of each nontruncated vertex.
\begin{propo}\label{073}
Let $G$ be a finite group with identity element $e$, and let $\mu:G\to\Z_{>0}$ be a funtion such that $\mu(e)=1$. Suposse that $|G|>1$ is not a prime number. For $\G$ and $\G'$ Brauer configurations of $\GG_{(G,\mu)}$, let $\L_{\G}$ and $\L_{\G'}$ denote the Brauer configuration algebras associated to $\G$ and $\G'$, respectively. Then $\dim_KZ(\L_{\G})=\dim_KZ(\L_{\G'})$.
\end{propo}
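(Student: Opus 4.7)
The plan is to reduce everything to Theorem \ref{074}. That theorem already gives, for any connected Brauer configuration $\G$ whose polygons are all sets, the explicit formula
\[
\dim_K Z(\L) = 1 + \sum_{\alpha\in\G_0}\mu(\alpha) + |\G_1| - |\G_0|,
\]
which manifestly does not depend on the chosen orientation $\oo$. Hence, if I can show that every element of $\GG_{(G,\mu)}$ satisfies the hypotheses of Theorem \ref{074}, both $\L_\G$ and $\L_{\G'}$ will have centers of dimension $1 + \sum_{x\in G}\mu(x) + |\G_1| - |G|$, and the desired equality follows at once.

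The first hypothesis is immediate: each polygon of a $\G \in \GG_{(G,\mu)}$ is a nontrivial subgroup $H \leqslant G$, and a subgroup is an honest subset of $G$, so $\occ(x,H)\in\{0,1\}$ for every $x\in G$. For connectedness, I would argue that the identity $e$ lies in every polygon $H \in \G_1$. Since $|G|$ is not $1$ and not a prime, Cauchy's theorem (applied to any prime divisor of $|G|$) produces a proper nontrivial subgroup of $G$; together with $G$ itself, this forces $|\G_1|\geq 2$, so $\val(e)\geq 2$ and $e$ is a nontruncated vertex of $\G$. Consequently the successor sequence at $e$ visits every polygon of $\G_1$, and the corresponding arrows in the induced quiver $\Q$ form a closed walk passing through every vertex of $\Q$. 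In particular $\Q$ is connected.

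With both hypotheses verified, Theorem \ref{074} applied to $\G$ and to $\G'$ produces the same value for $\dim_K Z(\L_\G)$ and $\dim_K Z(\L_{\G'})$, namely the orientation-free quantity $1 + \sum_{x\in G}\mu(x) + |\G_1| - |G|$. Thus the proposition is essentially a corollary of Theorem \ref{074}, and the only step that requires any real thought is the connectedness check, which is also the single place in the argument where the assumption that $|G|$ is not a prime is used.
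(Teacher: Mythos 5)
Your proof is correct and follows the same route as the paper, which simply cites Theorem \ref{074}; you have merely made explicit the verification (polygons are sets, and connectedness via the nontruncated vertex $e$) that the paper leaves implicit.
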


\begin{proof}
It's an immediate consequence of Theorem \ref{074}.
\end{proof}

Now, let $(G,\mu)$ be a pair such that $|G|$ is a prime number, and let $\mu:G\to\Z_{>0}$ be a function such that $\mu(e)=1$ but $\mu\not\equiv1$. Applying the same construction that appears in Proposition \ref{059}, let $\G=(\G_0,\G_1,\mu,\oo)$ be the configuration formed by
\begin{itemize}
\item $\G_0=G$;
\item $\G_1=\{G\}$ and
\item $\oo$ is the list formed by each nontruncated vertex that has as list of polygons the one formed only by $G$. 
\end{itemize}
Due to the fact that $\mu\not\equiv1$, necessarily the only polygon of $\G_1$ has at least one nontruncated vertex, hence $\G$ is a Brauer configuration. In this case, the collection $\GG_{(G,\mu)}$ is formed only by the configuration $\G$.

Now, let us consider an extreme case. If $(G,\mu)$ is a pair such that $|G|$ is a prime number and $\mu:G\to\Z_{>0}$ is the constant function $\mu\equiv1$, then the induced configuration as constructed in Proposition \ref{059} has the property  that every vertex in it is truncated. That is, if $\G=(\G_0,\G_1,\mu,\oo)$ is the configuration induced by $(G,\mu)$, where  $\G_0=G,\G_1=\{G\}$, then \[\val(x)=\mu(x)=1,\textrm{ for any }x\in G.\] According to \cite[Definition 1.5]{brau}, condition C3 is not satisfied, thus the configuration $\G$ cannot be a Brauer configuration. However, in this  particular case, we still associate to the pair $(G,\mu)$ the collection $\GG_{(G,\mu)}$ formed only by $\G$.

In summary, we associate to the given pair $(G,\mu)$ the collection $\GG_{(G,\mu)}$ formed by all the Brauer configurations as constructed in Proposition \ref{059}, if $|G|$ is not a prime number, or if $|G|$ is a prime number but $\mu\not\equiv1$. For each $\G\in\GG_{(G,\mu)}$ we also associate the induced quiver $\Q_{\G}$ and the induced Brauer configuration algebra $\L_{\G}$. If $|G|$ is a prime number and $\mu\equiv1$, then $\GG_{(G,\mu)}$ is formed by the only configuration $\G$ as we just constructed above for this case; to this configuration $\G$ we associate the quiver $\Q_{\G}=\xymatrix{\cdot\ar@(dr,ur)_{x}}$ and the Brauer graph algebra $\L_{\G}=K[x]/(x^2)$ (see \cite[Page 540]{extal}).

\begin{theo}\label{066}
Let $(G,\mu)$ be a pair where $G$ is a nontrivial finite group with identity element $e$, and $\mu:G\to\Z_{>0}$ is the constant funtion $\mu\equiv1$. If $\G\in\GG_{(G,\mu)}$, then \[\dim_KZ(\L_{\G})=\occ_{G}(e).\]
\end{theo}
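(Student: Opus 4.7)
The plan is to split the argument according to whether $|G|$ is a prime number, reducing the non-prime case to Theorem \ref{074} and treating the prime case by hand.

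First I would handle the case where $|G|$ is not prime, so that $\G \in \GG_{(G,\mu)}$ is one of the Brauer configurations constructed in Proposition \ref{059}. To invoke Theorem \ref{074} I need that every polygon of $\G$ is a set and that $\G$ is connected. The first is immediate, since every polygon in $\G_1$ is a subgroup of $G$. For connectedness, I would observe that $e \in H$ for every $H \in \G_1$, so $e$ is a vertex of every polygon; moreover, the proof of Proposition \ref{059} already shows $\val(e) = \occ_G(e) - 1 \geq 2$, so $e$ is nontruncated and its successor sequence links all polygons through arrows in the induced quiver.

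With these hypotheses verified, Theorem \ref{074} gives
\[ \dim_K Z(\L_\G) = 1 + \sum_{\alpha \in \G_0}\mu(\alpha) + |\G_1| - |\G_0|. \]
Using $\mu \equiv 1$ one has $\sum_{\alpha \in \G_0}\mu(\alpha) = |G| = |\G_0|$, so those two terms cancel; and $|\G_1| = \occ_G(e) - 1$ because $\G_1$ is the collection of all subgroups of $G$ except the trivial one. The formula therefore collapses to $\dim_K Z(\L_\G) = 1 + (\occ_G(e) - 1) = \occ_G(e)$, as desired.

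Finally, in the prime case the excerpt explicitly sets $\L_\G = K[x]/(x^2)$, a two-dimensional commutative $K$-algebra; since it is commutative, $Z(\L_\G) = \L_\G$ has $K$-dimension $2$, which matches $\occ_G(e) = 2$ (the only subgroups of a group of prime order being the trivial ones). I expect the only subtle point in the whole argument is verifying connectedness of $\G$ in the non-prime case, but the presence of $e$ as a common nontruncated vertex of every polygon disposes of it at once.
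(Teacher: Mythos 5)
Your proposal is correct and follows essentially the same route as the paper: reduce the non-prime case to Theorem \ref{074} (whose formula collapses to $\occ_G(e)$ since $\mu\equiv1$ and $|\G_1|=\occ_G(e)-1$), and handle the prime case directly via $\L_{\G}=K[x]/(x^2)$. Your explicit check of connectedness via the common nontruncated vertex $e$ is a detail the paper leaves implicit, but it does not change the argument.
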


\begin{proof}
 First, let us suppose that $|G|$ is not a prime number, and let $\L_{\G}$ be the associated Brauer configuration algebra to the Brauer configuration $\G\in\GG_{(G,\mu)}$. By Theorem \ref{074} we have that the vector dimension of the center of $\L_{\G}$ is equal to
 \begin{eqnarray*}
 \dim_KZ(\L_{\G}) & = & 1+\sum_{x\in G}1+|\G_1|-|G|\\ & = & 1+|G|+\occ_G(e)-1-|G|\\ & = & \occ_G(e).
 \end{eqnarray*}
 Now, if $|G|$ is a prime number, the associated algebra to the unique configuration $\G\in\GG_{(G,\mu)}$ is $\L_{\G}=K[x]/(x^2)$, then $Z(\L_{\G})=K[x]/(x^2)$ and \[\dim_KZ(\L_{\G})=2,\] and clearly the number of subgroups in $G$ is 2. So, either case, the vector dimension of the center of the associated $K$-algebra $\L_{\G}$ coincides with the number of subgroups in $G$.
\end{proof}

\nocite{*}
\bibliographystyle{plain}
\bibliography{Bcabibli}

\begin{thebibliography}{1}

\bibitem{rep1}
Ibrahim Assem, Daniel Simson, and Andrzej Skowro\'{n}ski.
\newblock {\em Elements of the representation theory of associative algebras.
  {V}ol. 1}, volume~65 of {\em London Mathematical Society Student Texts}.
\newblock Cambridge University Press, Cambridge, 2006.
\newblock Techniques of representation theory.

\bibitem{aus}
Maurice Auslander, Idun Reiten, and Sverre~O. Smal\o.
\newblock {\em Representation theory of {A}rtin algebras}, volume~36 of {\em
  Cambridge Studies in Advanced Mathematics}.
\newblock Cambridge University Press, Cambridge, 1995.

\bibitem{ben}
D.~J. Benson.
\newblock {\em Representations and cohomology. {I}}, volume~30 of {\em
  Cambridge Studies in Advanced Mathematics}.
\newblock Cambridge University Press, Cambridge, second edition, 1998.
\newblock Basic representation theory of finite groups and associative
  algebras.

\bibitem{mult_rep}
Edward~L. Green and Sibylle Schroll.
\newblock Multiserial and special multiserial algebras and their
  representations.
\newblock {\em Adv. Math.}, 302:1111--1136, 2016.

\bibitem{brau}
Edward~L. Green and Sibylle Schroll.
\newblock Brauer configuration algebras: a generalization of {B}rauer graph
  algebras.
\newblock {\em Bull. Sci. Math.}, 141(6):539--572, 2017.

\bibitem{brau2}
Edward~L. Green and Sibylle Schroll.
\newblock Special multiserial algebras are quotients of symmetric special
  multiserial algebras.
\newblock {\em J. Algebra}, 473:397--405, 2017.

\bibitem{extal}
Edward~L. Green, Sibylle Schroll, Nicole Snashall, and Rachel Taillefer.
\newblock The {E}xt algebra of a {B}rauer graph algebra.
\newblock {\em J. Noncommut. Geom.}, 11(2):537--579, 2017.

\bibitem{MalicSchroll}
Goran Mali\'{c} and Sibylle Schroll.
\newblock Dessins d'enfants and {B}rauer configuration algebras.
\newblock In {\em Galois covers, {G}rothendieck-{T}eichm\"{u}ller {T}heory and
  {D}essins d'{E}nfants}, volume 330 of {\em Springer Proc. Math. Stat.}, pages
  205--225. Springer, Cham, [2020] \copyright 2020.

\bibitem{mya}
Alex Sierra~C.
\newblock The dimension of the center of a {B}rauer configuration algebra.
\newblock {\em J. Algebra}, 510:289--318, 2018.

\end{thebibliography}

\end{document}